\author[ N. Attia]{Najmeddine Attia}
\address{Najmeddine Attia \\ Department of  Mathematics \\ Faculty of Sciences of Monastir \\  University of Monastir\\ Monastir-5000, Tunisia}
\email{najmeddine.attia@gmail.com}
\author[ H. Jebali ]{Hajer Jebali}
\address{Hajer Jebali\\ Department of  Mathematics \\ Faculty of Sciences of Monastir \\  University of Monastir\\ Monastir-5000, Tunisia}
\email{hajer.jebali@fsm.rnu.tn}
\author[ R. Guedri ]{Riheb Guedri}
\address{Riheb Guedri\\ Department of  Mathematics \\ Faculty of Sciences of Monastir \\  University of Monastir\\ Monastir-5000, Tunisia}
\email{rihabguedri096@gmail.com}
\newcommand{\R}{\mathbb R}
\newcommand{\N}{\mathbb N}
\newcommand{\X}{{\mathbb X}}
\newcommand{\HHH}{{\mathcal H}}
\newcommand{\W}{{\mathcal W}}
\newcommand{\F}{{\mathcal F}}
\newcommand{\HH}{{\mathsf H}}
\newcommand{\wt}{\widetilde}
\newtheorem{theorem}{Theorem}
\newtheorem*{theoremA}{Theorem A}
\newtheorem*{theoremB}{Theorem B}
\newtheorem*{theoremC}{Theorem C}
\newtheorem{lemma}{Lemma}
\newtheorem*{Proof of Theorem B}{Proof of Theorem B}
\newtheorem{proposition}{Proposition}
\newtheorem{corollary}{Corollary}
\newtheorem{definition}{Definition}
\newtheorem{remark}{Remark}
\newtheorem{example}{Example}
\DeclareMathOperator{\supp}{supp} 
\DeclareMathOperator{\diam}{diam} \numberwithin{equation}{section}
\title[ On a class of  Hausdorff  measure of  cartesian product sets ]{ On a class of Hausdorff   measure of cartesian product sets in metric spaces}
\begin{document}

\maketitle

\begin{abstract}
 In this paper,  we study, in a separable  metric space, a class of Hausdorff  measures $\HHH_\mu^{q, \xi}$ defined using a measure $\mu$ and a premeasure $\xi$.  We discuss  a Hausdorff  structure of product sets. Weighted  Hausdorff  measures $\W_\mu^{q, \xi}$  appeared as an important tool when studying the  product sets. When $\mu$ and $\xi$ are blanketed, we prove that $\HHH_\mu^{q, \xi} = \W_\mu^{q, \xi}$.
As an application, the case when $\xi$ is defined as the Hausdorff function is considered.

  \bigskip

\noindent{Keywords}:  Hausdorff measures, weighted  measures, product sets.

\bigskip
\noindent{Mathematics Subject Classification:} 28A78, 28A80.
\end{abstract}

\maketitle

\section{Introduction }

 Let $(\X, \rho)$ and  $(\X', \rho')$ be two separable  metric spaces. For $\mu\in\mathcal{P}(\X)$, the family of  Borel probability measures on $\X$,  and $a>1$, we write
 $$
P_a(\mu)=\limsup_{r\searrow 0}\left( \sup_{x\in\supp\mu}
\displaystyle\frac{\mu\big(B(x, ar)\big)}{\mu\big(B(x, r)\big)}\right).
 $$
We will now say that  $\mu$ is a blanketed measure if there exists $a>1$ such that $P_a(\mu)<\infty$. It is
easily seen that the exact value of the parameter $a$ is
unimportant since $P_a(\mu)<\infty$, for some  $a>1$ if and only if
$P_a(\mu)<\infty$, for all $a>1$. Also, we will write
$\mathcal{P}_D( \X) $ for the family of blanketed Borel probability
measures on $\X$. We
can cite as classical examples of blanketed measures, the
self-similar measures and the self-conformal ones \cite{Ol1}. \\

Let $\mu\in  \mathcal{P}(\X)$, $\nu\in  \mathcal{P}(\X')$, $q, t, s\in \R$ and denote by $\HHH_\mu^{q,t}$ the multifractal Hausdorff measure introduced in \cite{Ol1}.  Then, there exists a finite positive constant $\gamma$ such that
 \begin{equation}\label{classical}
\HHH_{\mu\times \nu}^{q, t+ s}  (E\times F) \ge\gamma \HHH_\mu^{q, t} (E) \HHH_\nu^{q,s}(F),
\end{equation}
for any $E\subseteq \X$ and $F\subseteq \X'$. This result has been proved in \cite{Olsen96} for any subsets $E$ and $E'$ of Euclidean spaces $\R^d$ $(d\ge 1)$ provided that $\mu$ and $\nu$ are blanketed measures and in \cite{Attia20} by investigating some density results.  The disadvantage of density approach  includes the inability to handle sets of measure $\infty$.  Moreover, if $q=0$, then the inequality \eqref{classical}  was shown in \cite{Besicovitch45}  under certain conditions and  in \cite{Marstrand54} without any restrictions. Similar results were proved for packing measure and Hewitt-Stromberg measure (see for example \cite{Haase90a,
Haase90b, Taylor94, Howroyd96,  Attia20a, Attia20, Rihab, Attia22c, Attia23}). A generalized Hausdorff measure ${\mathcal H}^{q,t}_{\mu, \nu}$ was introduced by Cole in \cite{Cole}, where the author gave a general formalism for the multifractal analysis of a probability  measure $\mu$ with respect to another measure $\nu$, (see also \cite{Naj} for more details about ${\mathcal H}^{q,t}_{\mu, \nu}$).\\

 \medskip

 Let's denote by ${\mathcal B}(\X)$ the family of closed balls on $\X$ and $\Phi(\X)$ the class of premeasures, i.e.,  every increasing function $\xi : {\mathcal B}(\X) \to [0, +\infty]$ such that $\xi(\emptyset ) = 0$. We will denote by $\Phi_D(\X)$  the set of all premeasures $\xi\in\Phi(\X)$ satisfying  $\xi(B(x, 2 r)) \le K \xi(B(x,r))$ for some constant $K>1$ and for all $x\in \X$ and $0<r \le 1$.
  We consider a general construction, in a separable  metric space, of the   Hausdorff  measure $\HHH_\mu^{q,\xi}$ defined using a measure $\mu$ and a premeasure $\xi$ and we prove  in Subsection \ref{sec_density} that : if  $\mu$, $\nu$, $\xi$ and $\xi'$ are blanketed and if $\HHH_\mu^{q, \xi}(E)$ and $\HHH_\nu^{q, \xi'}(E')$ are finite, then
$$
\HHH_{\mu\times\nu}^{q,\xi_0}(E\times E')\ge \HHH_\mu^{q,\xi}(E) \HHH_\nu^{q, \xi'}(E')\,,
$$

where $\xi_0$ is the cartesian product measure generated from the product of $\xi$ and $\xi'$ and defined on $\mathcal{B}(\X\times\X')$ by  $\xi_0(B\times B') =\xi(B)\xi'(B')$ (see Theorem \ref{product_partial}).

 We also introduce and study a weighted generalized Hausdorff measure $\W_\mu^{q, \xi}$, for  $\xi \in \Phi(\X)$,
  (see definition in Section \ref{Prel}). Weighted and centered covers of a set $E$ are employed, in which non-negative weights are associated  with the covering sets. They appeared as an important tool when studying product sets.

    \medskip

    Throughout this paper,  we allow the cartesian product measure $\xi_0$  to be defined on $\mathcal{B}(\X)\times\mathcal{B}(\X')$   with the
    convention,  that $0 \times \infty= \infty\times 0 = 0$. Next, we give our first main results on  the weighted Hausdorff measure (Theorem
    A). Nevertheless, the equation \eqref{eq_product_W} is not true in
zero-infinite case, that is when one with zero and the other with
infinite measure. To cover this case we shall require some
restrictions. More precisely,  we suppose that, for a given real
 $q$, $(\X, \xi, \mu)$  satisfy  the following asymption \\
 \begin{eqnarray}\label{hyp1}
 & & \X \;\; \text{can be covered by countable numbers of balls} \; (B_i)_i \; \text{of arbitrarily} \notag \\
&& \text{small  diameters and}\;  \mu(B_i)^q \xi(B_i)\;  \text{ is
finite}.
\end{eqnarray}

    \begin{theoremA}
Let $q\in \R$, $\xi\in \Phi(\X)$ and $\xi'\in \Phi(\X')$. Assume that $(\X, \xi, \mu)$ and $(\X', \xi', \nu)$ both satisfy \eqref{hyp1}  then for any $E\subseteq \X$ and $F\subseteq \X'$ we have
\begin{equation}\label{eq_product_W}
{\W}_{\mu\times \nu}^{q, \xi_0}(E\times F) ={\W}_{\mu}^{q, \xi}(E)  {\W}_{\nu}^{q, \xi'}(F).
\end{equation}
\end{theoremA}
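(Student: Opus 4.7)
The plan is to prove the two inequalities $\leq$ and $\geq$ separately, both by a direct product/slicing manipulation at the level of weighted covers, and then to dispatch the degenerate zero/infinite case by invoking the countable-cover hypothesis \eqref{hyp1}.

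For the upper bound $\W_{\mu\times\nu}^{q,\xi_{0}}(E\times F)\le \W_{\mu}^{q,\xi}(E)\,\W_{\nu}^{q,\xi'}(F)$, I would take arbitrary weighted $\delta$-covers $\{(B_i,c_i)\}_i$ of $E$ and $\{(B'_j,c'_j)\}_j$ of $F$ and form the product family $\{(B_i\times B'_j,\,c_ic'_j)\}_{i,j}$. The weighted covering condition factorises,
$$\sum_{i,j} c_i c'_j\, \chi_{B_i\times B'_j}(x,y)=\Bigl(\sum_i c_i \chi_{B_i}(x)\Bigr)\Bigl(\sum_j c'_j \chi_{B'_j}(y)\Bigr)\ge 1\quad\text{on } E\times F,$$
and since $(\mu\times\nu)(B_i\times B'_j)^{q}\,\xi_{0}(B_i\times B'_j)=\mu(B_i)^{q}\xi(B_i)\cdot \nu(B'_j)^{q}\xi'(B'_j)$, the cost of the product cover is exactly the product of the two costs. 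Infimising in each factor and letting $\delta\searrow 0$ yields the $\le$ direction.

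For the lower bound, let $\{(B_k\times B'_k,c_k)\}_k$ be any weighted $\delta$-cover of $E\times F$. Slicing at a fixed $y\in F$, the subfamily indexed by those $k$ with $y\in B'_k$ is a weighted cover of $E$ with weights $c_k$, whence for every $y\in F$
$$\sum_k c_k\,\chi_{B'_k}(y)\,\mu(B_k)^{q}\xi(B_k)\;\ge\;\W_{\mu,\delta}^{q,\xi}(E).$$
Reading this inequality dually, the family $\{(B'_k,\,c_k\mu(B_k)^{q}\xi(B_k))\}_k$ is a weighted cover of $F$ with threshold $\W_{\mu,\delta}^{q,\xi}(E)$; normalising by that threshold (when it is strictly positive and finite) and comparing with $\W_{\nu,\delta}^{q,\xi'}(F)$ gives
$$\sum_k c_k\,\mu(B_k)^{q}\xi(B_k)\,\nu(B'_k)^{q}\xi'(B'_k)\;\ge\;\W_{\mu,\delta}^{q,\xi}(E)\,\W_{\nu,\delta}^{q,\xi'}(F),$$
so the cost of an arbitrary cover of $E\times F$ dominates the product. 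Infimising over covers and letting $\delta\searrow 0$ finishes the $\ge$ direction in the nondegenerate range.

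The main obstacle is the zero/infinite case, in which the normalisation step collapses and the convention $0\times\infty=0$ on $\xi_{0}$ must be vindicated separately; this is where hypothesis \eqref{hyp1} does its essential work. If for instance $\W_{\mu}^{q,\xi}(E)=0$, then given $\e>0$ I would choose a weighted $\delta$-cover of $E$ of cost at most $\e$ and pair it with a countable cover $(B'_i)_i$ of $\X'$ as supplied by \eqref{hyp1}, chosen of diameters below $\delta$ and with weights $c'_i=1$; the resulting product family covers $E\times F$ with total cost at most $\e\sum_i \nu(B'_i)^{q}\xi'(B'_i)<\infty$, and letting $\e\to 0$ forces $\W_{\mu\times\nu}^{q,\xi_{0}}(E\times F)=0$, matching the convention regardless of the value of $\W_{\nu}^{q,\xi'}(F)$. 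The symmetric case is identical, while in all remaining combinations (both zero, both infinite, or both finite and positive) the two inequalities above already yield equality. The only minor technical point is the countable nature of the slicing manipulation, which is unproblematic because every weighted cover considered is countable and the slice functions $y\mapsto\chi_{B'_k}(y)$ are Borel.
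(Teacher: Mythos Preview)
Your slicing argument for the non-degenerate case (both factors strictly positive and finite) is essentially the paper's proof of Proposition~\ref{prop_equality}: the product of two weighted centered $\delta$-covers gives the upper bound, and slicing a cover of $E\times F$ through one coordinate produces, after normalisation, a weighted cover of the other factor. The paper normalises by an arbitrary $p$ strictly below the threshold rather than by the threshold itself, but this is cosmetic.

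The genuine gap is in your treatment of the zero--infinite case. When $\W_\mu^{q,\xi}(E)=0$ you propose to take a cheap centered cover of $E$ and pair it with the countable cover $(B'_i)_i$ of $\X'$ supplied by \eqref{hyp1}. This fails on two counts. First, the balls $B'_i$ are centered somewhere in $\X'$, not in $F$; hence the product balls $B_j\times B'_i$ need not be centered in $E\times F$, and so they do not constitute an admissible cover in the definition of $\W_{\mu\times\nu,\delta}^{q,\xi_0}$. Second, hypothesis \eqref{hyp1} only guarantees that each individual term $\nu(B'_i)^q\xi'(B'_i)$ is finite, not that the series $\sum_i \nu(B'_i)^q\xi'(B'_i)$ converges; your bound $\e\sum_i \nu(B'_i)^q\xi'(B'_i)$ may therefore be $\e\cdot\infty$, which tells you nothing as $\e\to 0$.

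The paper's Lemma~\ref{zero-infinite} avoids both obstacles by reversing the roles of the two sides. On the \emph{infinite} side one takes a weighted centered $\delta$-cover $(c_i,B_i)_i$ of $E$ whose individual terms $c_i\mu(B_i)^q\xi(B_i)$ are finite (this is where \eqref{hyp1} is invoked). Then, for each fixed $i$, one exploits $\W_{\nu,\delta}^{q,\xi'}(E')=0$ on the \emph{zero} side to choose a centered $\delta$-cover $(c'_{i_j},B'_{i_j})_j$ of $E'$ cheap enough that $c_i\mu(B_i)^q\xi(B_i)\sum_j c'_{i_j}\nu(B'_{i_j})^q\xi'(B'_{i_j})<\e/2^i$. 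The resulting doubly indexed family is genuinely centered in $E\times E'$ and has total cost below $\e$.

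Finally, you stop at ``letting $\delta\searrow 0$'', which only yields the identity for $\W_{\cdot,0}$. Since the measures $\W_\mu^{q,\xi}$ are defined via a further supremum over subsets (and $\W_{\cdot,0}$ is not monotone for centered covers), one must still pass from the $\delta$-level equality to the outer measures by taking $E_1\subseteq E$, $F_1\subseteq F$ and arguing as at the end of Section~\ref{proofA}.
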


\medskip

 It is natural to ask whether the wheighted and Hausdorff measures are equal. A sufficient condition is given in the following theorem.

\begin{theoremB} \label{W=H}
Let  $\mu\in {\mathcal P}_D(\X)$, $q\in \R$ and $\xi \in \Phi_D(\X)$.  then, for any $E\subseteq \X $ we have
$$
\W_\mu^{q, \xi}(E)= \HHH_\mu^{q, \xi}(E).
$$
\end{theoremB}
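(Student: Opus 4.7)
The plan is to establish Theorem B by proving the two inequalities $\W_\mu^{q,\xi}(E)\le \HHH_\mu^{q,\xi}(E)$ and $\HHH_\mu^{q,\xi}(E)\le \W_\mu^{q,\xi}(E)$ separately. The first is immediate from the definitions, while the second is where the blanketed hypotheses on $\mu$ and $\xi$ enter in an essential way.

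For $\W_\mu^{q,\xi}(E)\le \HHH_\mu^{q,\xi}(E)$: any $\delta$-cover $(B_i)_i$ of $E$ by closed balls yields a weighted $\delta$-cover by assigning weights $c_i\equiv 1$, and the resulting weighted sum $\sum_i c_i\mu(B_i)^q\xi(B_i)$ coincides with the Hausdorff-type sum $\sum_i \mu(B_i)^q\xi(B_i)$. Passing to the infimum over $\delta$-covers gives the pre-measure inequality for every $\delta>0$, which survives the limit $\delta\to 0$ and the Munroe metric extension.

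For $\HHH_\mu^{q,\xi}(E)\le \W_\mu^{q,\xi}(E)$: take an arbitrary weighted $\delta$-cover $\{(c_i,B_i)\}_i$ of $E$ with $\sum_i c_i\chi_{B_i}\ge 1$ on $E$, and extract from it a genuine cover whose Hausdorff-type sum is controlled by $\sum_i c_i\mu(B_i)^q\xi(B_i)$. I would stratify by the magnitude of the weights: for each $n\ge 1$, set
$$I_n=\{i : 2^{-n}\le c_i<2^{-n+1}\},\qquad E_n=\Bigl\{x\in E : \sum_{i\in I_n} c_i\chi_{B_i}(x)\ge \tfrac{6}{\pi^2 n^2}\Bigr\}.$$
Since $\sum_{n\ge 1}\tfrac{6}{\pi^2 n^2}=1\le \sum_i c_i\chi_{B_i}(x)$ for $x\in E$, one has $E=\bigcup_{n\ge 1}E_n$. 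On $E_n$, every point lies in at least $\lceil 12\cdot 2^n/(\pi^2 n^2)\rceil$ balls of $\{B_i\}_{i\in I_n}$, so a Vitali-type $5r$-covering selection provides a cover of $E_n$ by enlarged balls. The blanketed hypotheses $\mu\in \mathcal{P}_D(\X)$ and $\xi\in \Phi_D$ convert the $\mu^q\xi$-mass of each enlarged ball into a bounded multiple of the original, and the multiplicity bound lets us compare the sum over the selected balls with $\sum_{i\in I_n}c_i\mu(B_i)^q\xi(B_i)$ up to an absolute constant. Summing over $n$ and using countable subadditivity of $\HHH_\mu^{q,\xi}$ yields a bound of the form $\HHH_\mu^{q,\xi}(E)\le C\cdot\W_\mu^{q,\xi}(E)$.

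The main obstacle is then the elimination of the constant $C$, which a priori depends on $q$, $P_a(\mu)$ and $K$. My expectation is to absorb this constant by combining a careful optimization of the enlargement factor in the Vitali step with the limiting nature of the constructions of $\W_\mu^{q,\xi}$ and $\HHH_\mu^{q,\xi}$ via $\sup_{\delta}$ of infima together with Munroe's metric extension; in particular, the fact that the weighted centers may be chosen in $E$ itself should allow the enlargement factor to be taken arbitrarily close to $1$. Verifying the Vitali-type selection in a general separable metric space (where Besicovitch's covering theorem is unavailable) and tracking the blanketed constants sharply enough to reduce comparability to exact equality will be the technically most delicate part of the argument.
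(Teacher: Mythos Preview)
Your proposal correctly identifies the easy direction $\W_\mu^{q,\xi}\le\HHH_\mu^{q,\xi}$ (this is Remark~\ref{W<H}), and your stratification-plus-Vitali argument does yield the comparability $\HHH_\mu^{q,\xi}(E)\le C\,\W_\mu^{q,\xi}(E)$; in fact this is essentially the content of Lemma~\ref{W=H=infty} in the paper. The genuine gap is the elimination of the constant $C$. Your suggestion that ``the enlargement factor [can] be taken arbitrarily close to $1$'' is not justified: the $5r$-Vitali selection in a general separable metric space requires a fixed dilation factor bounded away from $1$, and the blanketed hypotheses only give $\xi(B(x,2r))\le K\,\xi(B(x,r))$ and $P_a(\mu)<\infty$ for some $a>1$---neither of which forces the comparison constants to approach $1$ as the dilation shrinks. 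There is no mechanism in your outline that turns comparability into equality.

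The paper's route is quite different and avoids this obstacle entirely. It first reduces via Lemma~\ref{W=H=infty} to the case $\HHH_\mu^{q,\xi}(E)<\infty$, then uses a density lemma (Lemma~\ref{2.14}) to show that, up to an $\HHH_\mu^{q,\xi}$-null set, every point of $E$ eventually satisfies $\HHH_\mu^{q,\xi}(B(x,r))\le(1+\epsilon)\,\mu(B(x,r))^q\xi(B(x,r))$. Given a weighted centered $\delta$-cover $(c_k,B_k)$ of $E$, one then \emph{integrates} the pointwise inequality $\chi_E\le\sum_k c_k\chi_{B_k}$ against the measure $\HHH_\mu^{q,\xi}$, obtaining
\[
\HHH_\mu^{q,\xi}(E\cap W_j)\le(1+\epsilon)\sum_k c_k\,\mu(B_k)^q\xi(B_k)
\]
on the good set $W_j$, with $(1+\epsilon)$ arbitrary. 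The sharpness comes from the density statement, not from any covering constant. This integration-against-the-measure idea is the step your proposal is missing.
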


Now we are able to give our main result on the Hausdorff measure.

\begin{theoremC}\label{product_WH}
Let $q\in \R$, $\xi\in \Phi(\X)$ and $\xi'\in \Phi(\X')$. Assume that $(\X, \xi, \mu)$ and $(\X', \xi', \nu)$ both satisfy \eqref{hyp1}   then
\begin{equation}\label{eq_thB}
 {\W}_{\mu}^{q, \xi}(E)  {\HHH}_{\nu}^{q, \xi'}(F)
 \le {\HHH}_{\mu\times \nu}^{q, \xi_0}(E\times F) \le  {\HHH}_{\mu}^{q, \xi}(E)  {\HHH}_{\nu}^{q, \xi'}(F).
\end{equation}
\end{theoremC}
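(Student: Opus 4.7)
The plan is to prove the two inequalities in \eqref{eq_thB} directly from the definitions, using product covers for the upper bound and converting a product cover of $E\times F$ into a weighted cover of $E$ for the lower bound. Theorem A is not invoked directly; instead the same weighted-cover machinery that underlies it reappears here through the weights $c_n$ introduced below.

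For the right inequality, fix $\delta>0$ and pick $\delta$-coverings $\{B_i\}_i$ of $E$ and $\{B'_j\}_j$ of $F$ by closed balls whose gauge sums approach $\HHH_\mu^{q,\xi}(E)$ and $\HHH_\nu^{q,\xi'}(F)$ respectively. Consider the product family $\{B_i\times B'_j\}_{i,j}$. Using the supremum metric on $\X\times\X'$ each $B_i\times B'_j$ is a ball of diameter at most $\delta$, and the identities $(\mu\times\nu)(B_i\times B'_j)=\mu(B_i)\nu(B'_j)$ and $\xi_0(B_i\times B'_j)=\xi(B_i)\xi'(B'_j)$ yield
$$
\sum_{i,j}(\mu\times\nu)(B_i\times B'_j)^q\xi_0(B_i\times B'_j)=\Bigl(\sum_i\mu(B_i)^q\xi(B_i)\Bigr)\Bigl(\sum_j\nu(B'_j)^q\xi'(B'_j)\Bigr).
$$
Taking infima and then $\delta\searrow 0$ gives $\HHH_{\mu\times\nu}^{q,\xi_0}(E\times F)\le\HHH_\mu^{q,\xi}(E)\HHH_\nu^{q,\xi'}(F)$.

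The left inequality is the interesting one. Let $\{B_n\times B'_n\}_n$ be a $\delta$-cover of $E\times F$ by product balls and write $\HHH_{\nu,\delta}^{q,\xi'}$ and $\W_{\mu,\delta}^{q,\xi}$ for the $\delta$-approximating premeasures. For each $x\in E$ the slice family $\{B'_n:x\in B_n\}$ is itself a $\delta$-cover of $F$, so
$$
\sum_{n:\,x\in B_n}\nu(B'_n)^q\xi'(B'_n)\ge \HHH_{\nu,\delta}^{q,\xi'}(F).
$$
Under the working assumption $0<\HHH_\nu^{q,\xi'}(F)<\infty$, define
$$
c_n=\frac{\nu(B'_n)^q\xi'(B'_n)}{\HHH_{\nu,\delta}^{q,\xi'}(F)},
$$
so that $\sum_{n:\,x\in B_n}c_n\ge 1$ for every $x\in E$ and $\{(c_n,B_n)\}_n$ is a weighted $\delta$-cover of $E$. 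The definition of $\W$ then gives
$$
\W_{\mu,\delta}^{q,\xi}(E)\,\HHH_{\nu,\delta}^{q,\xi'}(F)\le \sum_n(\mu\times\nu)(B_n\times B'_n)^q\xi_0(B_n\times B'_n),
$$
and passing to the infimum over product $\delta$-covers of $E\times F$ followed by $\delta\searrow 0$ produces $\W_\mu^{q,\xi}(E)\HHH_\nu^{q,\xi'}(F)\le\HHH_{\mu\times\nu}^{q,\xi_0}(E\times F)$.

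The main obstacle I anticipate is the reduction to genuinely product covers: an arbitrary $\delta$-cover of $E\times F$ may use sets that are not of the form $B\times B'$, and in a general metric space balls do not factorise. I plan to work throughout with the supremum metric on $\X\times\X'$, where balls are exactly products, and to absorb the small enlargement required to replace an arbitrary covering set by a product ball using assumption \eqref{hyp1} and the multiplicative definition of $\xi_0$. The boundary cases $\HHH_\nu^{q,\xi'}(F)\in\{0,\infty\}$ are treated separately: the vanishing case is trivial once one observes that \eqref{hyp1} forces $\W_\mu^{q,\xi}(E)<\infty$, while the infinite case follows by substituting an arbitrary large constant $M$ for $\HHH_{\nu,\delta}^{q,\xi'}(F)$ in the weights $c_n$ and letting $M\to\infty$.
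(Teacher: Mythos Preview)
Your core argument coincides with the paper's: for the upper bound you take the product of centered $\delta$-covers of $E$ and $F$, and for the lower bound you slice a centered $\delta$-cover $\{B_n\times B'_n\}$ of $E\times F$ through each $x\in E$, obtaining a centered $\delta$-cover $\{B'_n:x\in B_n\}$ of $F$, and rescale by the $F$-gauge to produce a weighted centered $\delta$-cover of $E$. The paper writes the weights as $u_i=\nu(B'_i)^q\xi'(B'_i)/p$ for an arbitrary $p<\HHH_{\nu,\delta}^{q,\xi'}(F)$ rather than dividing by $\HHH_{\nu,\delta}^{q,\xi'}(F)$ itself; this is cosmetic, and your handling of $\HHH_\nu^{q,\xi'}(F)=\infty$ by replacing the denominator with an arbitrary $M$ is effectively the same device.

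Two corrections are needed. First, your ``main obstacle'' is not one: the product space is equipped with the metric $\rho\times\rho'\big((x_1,x'_1),(x_2,x'_2)\big)=\max\{\rho(x_1,x_2),\rho'(x'_1,x'_2)\}$, under which \emph{every} closed ball is already a product $B(x,r)\times B(x',r)$; hence centered $\delta$-covers of $E\times F$ are automatically by product balls and no enlargement or appeal to \eqref{hyp1} is required at this step. Second, your claim that \eqref{hyp1} forces $\W_\mu^{q,\xi}(E)<\infty$ is false: \eqref{hyp1} only guarantees covers whose individual terms $\mu(B_i)^q\xi(B_i)$ are finite, not covers with finite total sum. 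This does not damage the lower bound, since the convention $0\times\infty=0$ already makes the case $\HHH_\nu^{q,\xi'}(F)=0$ trivial. Where \eqref{hyp1} is genuinely needed---and where you have a small gap---is in the \emph{upper} bound when one of $\HHH_\mu^{q,\xi}(E),\HHH_\nu^{q,\xi'}(F)$ vanishes and the other is infinite: the right-hand side is then $0$ and one must show $\HHH_{\mu\times\nu}^{q,\xi_0}(E\times F)=0$, which is exactly Lemma~\ref{zero-infinite}(2) and is the place the finiteness of the individual terms in \eqref{hyp1} enters.
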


\medskip
\noindent Let's remark that we can similarly prove that
\begin{equation*}
{\HHH}_{\mu}^{q, \xi}(E)   {\W}_{\nu}^{q, \xi'}(F)
 \le {\HHH}_{\mu\times \nu}^{q, \xi_0}(E\times F) \le  {\HHH}_{\mu}^{q, \xi}(E)  {\HHH}_{\nu}^{q, \xi'}(F).
\end{equation*}
\medskip

The proofs of our main results are  given in section \ref{proofs}.
Theorems A and C may be compared with the results  proved in
\cite{kelly73} where the author  uses, to cover a set $E$, any
subsets of $\X$ not necessarily centered in $E$.  The uses of
centered covering makes the regularity of Hausdorff and weighted
Hausdorff measures not trivial. This fact will be discussed in
Subsection \ref{regular}. Let's also mention  that results in
\cite{kelly73} didn't include the case $q<0$.
\medskip

We denote by  $\F$ the set of all Hausdorff functions, that is, the set of all  right  continuous and monotonic  increasing functions
  $h$ defined for $r\in [0, +\infty]$ with $h(0)=0$ and $h(r)>0$, for all $r>0$.
 If $\xi\in \Phi(\X)$ is defined by a Hausdorff function
 $h$, that is, $\xi(\emptyset )=0$ and, for all $B\in { \mathcal B}(\X)$,
 $$
   \xi(B) = h(\diam_\rho B), \quad \text{if}\;\; B\neq \emptyset,
 $$
 then $\xi$ will denoted by $\xi_h$. Here $\diam_\rho B$ designs the diameter of $B$ with respect to $\rho$, when there
  is no confusion
 we will simply denote  $\diam B$.  The generalized Hausdorff and weighted Hausdorff
  measures generated from a Hausdorff function $h$ will be denoted by
  $\HHH_\mu^{q, h}$ and $\W_\mu^{q,h}$ respectively.
For,  $h, h'\in {\F}$, we define the Hausdorff function $h\times h'$ by
\[h\times h'(r)=h(r)h'(r),\quad \quad \text{for all}\ r\ge 0.\,\]
As an application of Theorem C, we will prove in Section \ref{application_h} that
$$
\HHH_{\mu\times\nu}^{q,h\times h'}(E\times F)\geq \HHH_{\mu\times\nu}^{q, \xi_0}(E\times F)\ge {\W}_{\mu}^{q, h}(E)   {\HHH}_{\nu}^{q, h'}(F).
$$

%%%%%%%%%%%%%%%%%%%%%%%%%%%%%%%%%%%%%%%%%%%%%%%%%%%%%%%%%%%%%%%%%%%%%%%%%%%%%%%%%%%%

 \section{Construction of generalized measures}\label{Prel}
%%%%%%%%%%%%%%%%%%%%%%%%%%%%%%%%%%%%%%%%%%%%%%%%%%%%%%%%%%%%%%%%%%%%%%%%%%%%%%%%%%%%%ùù

 %%%%%%%%%%%%%%%%%%%%%%%%%%%%%%%%%%%%%%%%%%%%%%%%%%%%%%%%%%%%%%%%%%%%%%%%%%%%%%%%%%%%
 \subsection{Generalized Hausdorff measure}
%%%%%%%%%%%%%%%%%%%%%%%%%%%%%%%%%%%%%%%%%%%%%%%%%%%%%%%%%%%%%%%%%%%%%%%%%%%%%%%%%%%%%%%%%%%%

 Let $\delta >0$, a sequence of  closed  balls  $\{ B_i\}_i $   is called a centered $\delta-$cover of a set $E$ if, for all $i\ge 1$,  $B_i$ is centered in $E$,  $\diam B_i \le 2 \delta$ and $E\subseteq \bigcup_{i=1}^{\infty} B_i$.  Let  $\mu\in\mathcal{P}(\X)$, $ \xi \in \Phi(\X)$ and $q\in \R$, we write
\begin{equation*}
\begin{split}
{{\mathcal H}}^{q,\xi}_{\mu, \delta}(E) =&\displaystyle\inf\Big\{ \sum_i
\mu\big(B(x_i,r_i)\big)^q\xi \big( B(x_i, r_i)\big);\;\Big(B(x_i,r_i)\Big)_i\;\text{is a centered}\\
&\; \delta\text{-cover of}\; E\Big\},
  \end{split}
 \end{equation*}
 if $E\neq \emptyset$ and ${{\mathcal H}}^{q,\xi}_{\mu, \delta}(\emptyset) =0$,  with the conventions $0^q = \infty$ for $q\leq 0$ and ${{\mathcal H}}^{q,\xi}_{\mu, \delta}(E)$ is given infinite value if no centered $\delta$-cover of $E$ exists.  Now we define
 $$
 {{\mathcal H}}^{q,\xi}_{\mu,0 }(E) = \displaystyle\sup_{\delta>0} {{\mathcal H}}^{q,\xi}_{\mu, \delta}(E)$$
and
$$
{\mathcal H}^{q,\xi}_{\mu}(E)=\displaystyle\sup_{F\subseteq
E}{{\mathcal H}}^{q,\xi}_{\mu,0} (F). $$ We will prove that the
function ${\mathcal H}^{q,\xi}_{\mu}$ is a metric outer measure and
thus a measure on the Borel family of subsets of $\X$. The measure
${\mathcal H}^{q,\xi}_{\mu}$ is of course a
generalization of the centered Hausdorff measure ${\mathcal C}^t$ and multifractal Hausdorff measure  ${\mathcal H}^{q,t}_{\mu}$ \cite{Raymond88, Ol1} or ${\mathcal H}^{q,t}_{\mu,\nu}$ \cite{Naj}. %In particular, if $\xi \big( B(x, r)\big) =(2r)^t, \; (t>0)$,   it is easily seen that  one has  ${\mathcal H}^{0,t}_{\mu} = {\mathcal C}^t$ and
%$$2^{-t} {\mathcal H}^{0,t}_{\mu}\leq {\mathcal H}^{t}\leq {\mathcal
%H}^{0,t}_{\mu},$$
%where ${\mathcal H}^{t}$  denote   the $t$-dimensional Hausdorff measure.

\begin{theorem}\label{metric_H}
Let $\mu\in\mathcal{P}(\X)$, $q \in\mathbb{R}$ and $\xi \in \Phi(\X)$. Then
\begin{enumerate}
\item $\HHH_\mu^{q,\xi}$ is a metric outer measure on  $\X$ and thus measure
on the Borel family of subsets of \; $\X$;
\item for any $E\subseteq \X$, we can find a Borel set $B$ such that  $E\subseteq B$ and
$$ \HHH_{\mu,0}^{q,\xi}(B)= \HHH_{\mu,0}^{q,\xi}(E).$$
\end{enumerate}
\end{theorem}
\begin{proof}
The proof of this Theorem  is straightforward and mimics that in Theorem \ref{metric_W}.
\end{proof}

%%%%%%%%%%%%%%%%%%%%%%%%%%%%%%%%%%%%%%%%%%%%%%%%%%%%%%%%%%%%%%%%%%%%%%%%%%%%%%%%%%%%

                                \subsection{Weighted generalized Hausdorff measure}
%%%%%%%%%%%%%%%%%%%%%%%%%%%%%%%%%%%%%%%%%%%%%%%%%%%%%%%%%%%%%%%%%%%%%%%%%%%%%%%%%%%%
In the following we define the weighted generalized Hausdorff  measure.
A sequence $ (c_i , B_i)_{i\ge 1}$  of pairs, with $c_i$ a nonnegative real number and $B_i$ a closed ball of $\X$,  is said to be a weighted  cover of  $E$ if
 $$
 {\chi}_E \le \sum_{i=1}^{\infty}  c_i {\chi}_{B_i}
 $$
that is for all points $ x$ of $E$ we have
\begin{equation}\label{weight}
\sum_{i=1}^{\infty} \Big\{ c_i \;; \; x\in B_i \Big\}\ge 1.
\end{equation}
 In addition,  for $\delta >0$, we say that $ (c_i ; B_i)_{i\ge 1}$ is a weighted and centered $\delta-$cover of $E$ if
\begin{itemize}
\item  it is a weighted cover of $E$;
\item  for all $i\ge 1$,  $B_i$ is centered in $E$ and $\diam B_i \le 2 \delta$.
\end{itemize}
We denote  the family of all weighted and centered $\delta$-covers of $E$ by $ \Upsilon_\delta(E)$. Write
\begin{equation} \label{w-delta}
\begin{split}
{\mathcal{W}}_{\mu,\delta}^{q,\xi}(E)= &\inf\Big\{\sum_i c_i \mu(B_i)^q \xi(B_i); \; \;\; (c_i, B_i)_i\in \Upsilon_\delta(E) \Big\}
\end{split}
 \end{equation}
if $E\neq \emptyset$ and $ { \mathcal{W}}_{\mu,\delta}^{q,\xi}(\emptyset) = 0$, with the convention that $\W^{q,\xi}_{\mu, \delta}(E)$ is given infinite value if no weighted and centered $\delta$-cover of $E$ exists.  Now,  by applying  the standard construction we obtain
 the   weighted generalized Hausdorff  ${ \mathcal{W}}_{\mu}^{q,\xi}$ defined by
 \begin{eqnarray*}
 &&{ \mathcal{W}}_{\mu,0}^{q,\xi}(E)= \sup_{\delta >0} { \mathcal{W}}_{\mu,\delta}^{q,\xi}(E)= \lim_{\delta\to 0}{ \mathcal{W}}_{\mu,\delta}^{q,\xi}(E)\\
&&{ \mathcal{W}}_{\mu}^{q,\xi}(E)= \sup_{F\subseteq E} { \mathcal{W}}_{\mu,0}^{q,\xi}(F).
\end{eqnarray*}
\begin{remark}\label{W<H}
If the weights $c_i$ are restricted to the value unity, then \eqref{weight} requires, only, that each point of $E$ be covered once. That is covered  in the normal sense. It follows that
$$
 { \mathcal{W}}_{\mu,\delta}^{q,\xi}(E) \le   { \HHH}_{\mu,\delta}^{q,\xi}(E)\quad \text{and then }\quad  { \mathcal{W}}_{\mu}^{q,\xi}(E) \le   { \HHH}_{\mu}^{q,\xi}(E).
$$

\end{remark}

\begin{theorem}\label{metric_W}
Let $\mu\in\mathcal{P}(\X)$, $q\in\mathbb{R}$ and $\xi \in \Phi(\X)$. Then
\begin{enumerate}
\item $\W_\mu^{q,\xi}$ is a metric outer measure on  $\X$ and thus measure
on the Borel family of subsets of \; $\X$;
\item for any  $E\subseteq \X$, we can find a Borel set $B$ such that  $E\subseteq B$
$$ \W_{\mu,0}^{q,\xi}(B)= \W_{\mu,0}^{q,\xi}(E).$$
\end{enumerate}
\end{theorem}
\begin{proof}
\begin{enumerate}
\item   It is clear that $\W_\mu^{q,\xi}$ is increasing and satisfies $\W_\mu^{q,\xi}(\emptyset )= 0$. Therefore, to prove that $\W_\mu^{q,\xi}$ is an outer measure, we only have to prove that, for any sequence $\{E_n\}_{n\ge 1}$ of subsets of $\X$, we have
\begin{equation}\label{additivite}
\W_\mu^{q,\xi}\Big(\bigcup_{n=1}^{\infty} E_n \Big) \le \sum_{n=1}^\infty \W_\mu^{q,\xi}(E_n).
\end{equation}
Let $\epsilon >0$ and $\delta >0$.  For each $n\ge 1$, let $\widetilde E_n\subseteq E_n$. Then  we can  find a weighted and centered  $\delta$-cover $(c_{ni}, B_{ni})_{i\ge 1}$ of $\widetilde E_n$ such that
$$
\sum_{i=1}^{\infty} c_{ni} \mu(B_{ni})^q \xi(B_{ni}) \le \W^{q,\xi}_{\mu, \delta} (\widetilde E_n)+ \frac{\epsilon}{2^n}.
$$
Since $(c_{ni}, B_{ni})_{n, i\ge 1}$, reordered as a single sequence, forms a   weighted and centered $\delta$-cover of $\bigcup_{n=1}^{\infty} \widetilde E_n$, we have
\begin{eqnarray*}
 \W^{q,\xi}_{\mu, \delta} \Big(\bigcup_{n=1}^{\infty} \widetilde E_n \Big) &\le&   \sum_{n=1}^\infty  \sum_{i=1}^\infty c_{ni} \mu(B_{ni})^q \xi(B_{ni})\le  \sum_{n=1}^\infty  \W^{q,\xi}_{\mu,\delta} (\widetilde E_n) + \epsilon.\\
&\le&  \sum_{n=1}^\infty  \W^{q,\xi}_{\mu, 0} ( \widetilde E_n) + \epsilon \le  \sum_{n=1}^\infty  \W^{q,\xi}_{\mu} (E_n) + \epsilon .\\
 \end{eqnarray*}
By letting $\delta$ and $\epsilon$ to zero we obtain $ \W^{q,\xi}_{\mu,0} \Big(\bigcup_{n=1}^{\infty} \widetilde E_n \Big)\le  \displaystyle\sum_{n=1}^\infty  \W^{q,\xi}_{\mu} (E_n)$. Consequently \eqref{additivite} holds, so that $\W^{q,\xi}_{\mu}$ is an outer  measure. \\

Now we will prove that the measure $\W_\mu^{q,\xi}$ is metric.  For this,  let $E,F \subseteq\X$ such that $\rho(E, F) =  \inf\left\{ \rho(x,y) , x\in E, y\in F \right\} >0.$ Since $\W_\mu^{q,\xi}$ is an outer measure,  it suffices to prove that
\begin{equation}\label{metric}
\W_\mu^{q,\xi}\Big(E\bigcup F\Big)\geq\W_\mu^{q,\xi}(E)+ \W_\mu^{q,\xi}(F).
\end{equation}
We may assume that $\W_\mu^{q,\xi}\Big(E\bigcup F\Big)$  is finite. Let $E_1\subseteq E$,  $F_1\subseteq F$ and $(c_i, B_i)$ be a weighted and centered  $\delta$-cover of $E_1\cup F_1$ such that $2\delta < \rho(E, F)$. Let
  $$I=\Big\{i;\: B_i\bigcap E_1\neq\emptyset\Big\} \qquad \text{and}\qquad J=\Big\{i;\: B_i \bigcap  F_1\neq\emptyset\Big\}.$$ It is clear that $\{ c_i, B_i\}_{i\in I}$ is a weighted and centered  $\delta$-cover of  $E_1$ and $\{ c_i, B_i\}_{i\in J}$ is a weighted and centered $\delta$-cover of  $F_1$. It  follows  that
$$\W_{\mu, \delta}^{q,\xi}\Big(E_1\bigcup F_1\Big)\geq\W_{\mu, \delta}^{q,\xi}(E_1)+
\W_{\mu,\delta}^{q,\xi}(F_1)$$
and then
$$\W_\mu^{q,\xi}\Big(E\bigcup F\Big)\ge\W_{\mu,0}^{q,\xi}\Big(E_1\bigcup F_1\Big)\geq \W_{\mu, 0}^{q,\xi}(E_1)+ \W_{\mu,0}^{q,\xi}(F_1).$$
Since $E_1$ is  arbitrary subset of $E$ and $F_1$ is  arbitrary subset of $F$ we get \eqref{metric}.
\item Let $E\subseteq \X$. We may suppose that $\W_\mu^{q,\xi}$ is finite, since, if this is not the case, we have $\W_\mu^{q,\xi}(\X) = \W_\mu^{q,\xi}(E)$.  Remark that the infimum of \eqref{w-delta} remains the same when taken over strict weighted and centered $\delta$-cover of $E$,  that is,  a weighted and centered $\delta$-cover  of $E$ such that
$$
\sum_{i=1}^{\infty} \Big\{ c_{i} \;; \; x\in B_{i} \Big\} > 1,
$$
for all point $x\in E$. Now, consider, for each integer $n$, a strict weighted and centered $\delta$-cover $\{ c_{ni}, B_{ni}\}_{i\ge 1}$ of  $E$ such that
\begin{equation}\label{regular}
 \sum_{i=1}^\infty c_{ni} \mu(B_{ni})^q \xi(B_{ni}) \le \W_{\mu, \delta}^{q,\xi}(E) + \frac{1}{n}.
\end{equation}
For each $n$, consider the set
$$
B_n := \Big\{ x  :\;\;  \sum_{\stackrel{i=1}{ x \in B_{ni}}}^{\infty} c_{ni}>1 \Big\}.
$$
It is clear that $E \subseteq B_n$ and, by \eqref{regular},
\begin{equation}\label{regular1}
\W_{\mu, \delta}^{q,\xi}(B_n) \le \W_{\mu, \delta}^{q,\xi}(E) + \frac{1}{n}.
\end{equation}
In addition, for $x\in B_n$, there exists $k = k(n,x)$ such that
$$
\sum_{\stackrel{i=1}{ x \in B_{ni}}}^{k} c_{ni}>1,
$$
and  $x\in \displaystyle\bigcap_{\stackrel{i=1}{x\in B_{ni}}}^{k} B_{ni} \subseteq B_n.$ Therefore, $B_n$ can be expressed as the countable union  of  such finite intersection and thus, by taking $B=\displaystyle\bigcap_{n=1}^{\infty} B_n$ and using \eqref{regular1}, we obtain,
$$ \W_{\mu,\delta}^{q,\xi}(B)= \W_{\mu, \delta}^{q,\xi}(E).$$
Therefore, we can choose, for each integer $j$ a Borel set $B_j$ such that $E\subseteq B_j$ and $ \W_{\mu, 1/j}^{q,\xi}(B_j)= \W_{\mu, 1/j}^{q,\xi}(E)$. Let $B=\displaystyle\bigcap_{j=1}^{\infty} B_j$ we get
$$
E\subseteq B \quad \text{and}\quad \W_{\mu,0}^{q,\xi}(B)= \W_{\mu,0}^{q,\xi}(E).
$$
\end{enumerate}
\end{proof}
\begin{remark}\label{metric_W_0}
We can deduce from the proof of Theorem \ref{metric_W} that, for any sets $E, F\subseteq \X$,
$$
\W_{\mu,0}^{q,\xi} (E\cup F) \le \W_{\mu,0}^{q,\xi}(E) + \W_{\mu,0}^{q,\xi}(F)
$$
and we have the equality if  $ \rho(E, F) >0$.
\end{remark}

\begin{lemma}\label{W=H=infty}
Let  $\mu\in {\mathcal P}(\X)$, $q\in \R$ and $\xi \in \Phi_D(\X)$.  Then, for any $E\subseteq \X$ there exists a constant $\beta$ such that
\begin{equation}\label{infty}
 \beta  \,\HHH_{\mu}^{q, \xi}(E)\le W_{\mu}^{q, \xi}(E).
 \end{equation}
provided that $q\le 0$ or $q>0$ and $\mu\in {\mathcal P}_D(\X)$. In particular, we have
$$\HHH_\mu^{q, \xi}(E) =+ \infty \iff \W_\mu^{q, \xi}(E)= +\infty.
$$
\end{lemma}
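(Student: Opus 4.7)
The plan is to prove the main inequality \eqref{infty} with a universal constant $\beta>0$ depending only on the doubling constants of $\xi$ and, when $q>0$, of $\mu$. The equivalence $\HHH_\mu^{q,\xi}(E)=+\infty\iff\W_\mu^{q,\xi}(E)=+\infty$ then falls out immediately: the forward direction follows from \eqref{infty} together with $\beta>0$, and the converse is the trivial bound $\W_\mu^{q,\xi}\le\HHH_\mu^{q,\xi}$ recorded in Remark \ref{W<H}.

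To establish \eqref{infty} I would use a Frostman-type duality argument. The key intermediate claim to prove is a mass-distribution principle: under the standing doubling assumptions, for every $M<\HHH_\mu^{q,\xi}(E)$ there exists a finite nontrivial Borel measure $\nu$, concentrated on a Borel subset of $E$, such that $\nu(E)\ge \beta M$ and $\nu(B)\le \mu(B)^q\,\xi(B)$ for every closed ball $B$ centered at a point of $E$. Once such a $\nu$ is in hand the conclusion is immediate: for any weighted and centered $\delta$-cover $(c_i,B_i)_i$ of $E$, integrating the defining pointwise inequality $\chi_E\le\sum_i c_i\chi_{B_i}$ against $\nu$ yields
\[\beta M\le \nu(E)\le \int_\X \sum_i c_i\,\chi_{B_i}\,d\nu=\sum_i c_i\,\nu(B_i)\le \sum_i c_i\,\mu(B_i)^q\xi(B_i),\]
and taking the infimum over weighted centered $\delta$-covers gives $\W_{\mu,\delta}^{q,\xi}(E)\ge \beta M$ for every $\delta>0$, so $\W_{\mu,0}^{q,\xi}(E)\ge \beta M$ and a fortiori $\W_\mu^{q,\xi}(E)\ge \beta M$. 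Letting $M\nearrow\HHH_\mu^{q,\xi}(E)$ produces \eqref{infty}.

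The main obstacle, and the real technical content of the proof, is the construction of the Frostman-type measure $\nu$. I expect to produce it by a Kolmogorov-type iterative scheme: at each dyadic scale $\delta_n\downarrow 0$, take a near-optimal centered $\delta_n$-cover of a Borel subset $F\subseteq E$ witnessing $\HHH_{\mu,0}^{q,\xi}(F)>M$, distribute mass on the cover balls in proportion to $\mu(B)^q\xi(B)$, and renormalize so that the ball-mass bound is preserved from one scale to the next. The doubling hypothesis $\xi\in\Phi_D$ is exactly what is needed to control the change of $\xi$ across overlapping balls of comparable radius at successive scales, so that a scale-by-scale bound upgrades to a uniform ball-mass inequality. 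When $q>0$ the doubling of $\mu\in\mathcal{P}_D(\X)$ plays the analogous role for the factor $\mu(\cdot)^q$; when $q\le 0$ the monotonicity of $t\mapsto t^q$ makes the corresponding estimate one-sided and essentially free, which explains why no assumption on $\mu$ is needed in that range and accounts for the dichotomy in the hypotheses of the lemma.
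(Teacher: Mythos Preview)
Your strategy via a Frostman-type measure is a genuinely different route from the paper's. The paper argues directly at the level of covers: given a weighted centered $\delta$-cover $(c_i,B_i)$ of $F\subseteq E$, it invokes a weighted $3r$-covering lemma (Corollary~4.4 of Esmayli--Haj\l asz) to extract a subfamily $\{B_{i_j}\}$ with $F\subseteq\bigcup_j 3B_{i_j}$ and $\sum_j\mu(B_{i_j})^q\xi(B_{i_j})\le 8\sum_i c_i\,\mu(B_i)^q\xi(B_i)$. The doubling of $\xi$ (and of $\mu$ when $q>0$) then lets one pass from $3B_{i_j}$ to $B_{i_j}$ at the cost of a constant, giving $\HHH_{\mu,\delta}^{q,\xi}(F)\le 8C\,\W_{\mu,\delta}^{q,\xi}(F)$ immediately. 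No auxiliary measure is ever constructed.

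Your proposal, by contrast, has a real gap precisely at the step you identify as ``the main obstacle''. In a general separable metric space there is no dyadic-cube system, and near-optimal centered $\delta_n$-covers at successive scales do not nest, so the ``Kolmogorov-type iterative scheme'' you sketch has no tree structure to hang on. The standard way to produce a Frostman measure with total mass comparable to the Hausdorff measure in this generality is Howroyd's method, which passes through weighted Hausdorff measures and a Hahn--Banach argument---that is, it \emph{uses} the very comparison $\W\asymp\HHH$ you are trying to prove, so the route becomes circular. The doubling of $\xi$ (and of $\mu$ for $q>0$) controls ratios across concentric balls of comparable radii, but it does not by itself supply the nested combinatorial structure the iterative mass-distribution construction needs; in particular for $q\le 0$ no doubling of the ambient space is assumed at all. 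Unless you can exhibit an independent Frostman lemma valid in arbitrary separable metric spaces for the gauge $B\mapsto\mu(B)^q\xi(B)$, the construction of $\nu$ is not a detail to be filled in later but essentially the whole content of the inequality.
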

\begin{proof}
Let $\delta>0$, $F\subseteq E\subseteq \X$ and  $\{ c_i, B_i\}_{i\ge 1}$ is a weighted and centered  $\delta$-covering of  $F$, where $B_i := B(x_i, r_i)$. Using Corollary 4.4 in \cite{Esmayli20}, there exists a subfamily $\{B_{i_j}\}_{j\ge 1}$ of balls such that $F\subseteq \bigcup_{j\ge 1} 3 B_{i_j}$ and
$$
\sum_{j\ge 1}  \mu\big(B(x_{i_j},r_{i_j})\big)^q \xi\big( B(x_{i_j},r_{i_j})\big) \le 8 \sum_{i\ge 1}  c_i \mu\big(B(x_{i},r_{i})\big)^q \xi\big( B(x_{i},r_{i})\big),
$$
where $3 B_{i_j} : = B(x_{i_j}, 3 r_{i_j})$. Therefore, there exists a constant $C$ such that
\begin{eqnarray*}
\HHH_{\mu, \delta}^{q, \xi}(F)&\le& \sum_{j\ge 1}  \mu\big(B(x_{i_j}, 3 r_{i_j})\big)^q \xi\big( B(x_{i_j}, 3r_{i_j})\big)\\
%&\le& \begin{cases}
%3^t \sum_{j\ge 1}  \mu\big(B(x_{i_j}, r_{i_j})\big)^q \big( 2 r_{i_j}\big)^t & q\le 0 \\
%&\\
% 3^t\,  C \sum_{j\ge 1}  \mu\big(B(x_{i_j},  r_{i_j})\big)^q \big( 2 r_{i_j}\big)^t & q>0 \;\; \text{and}\;\;  \mu\in {\mathcal P}_D(\X)  \\
%\end{cases}\\
&\le& 8\,   C \sum_{i\ge 1}  c_i \mu\big(B(x_{i},  r_{i})\big)^q  \xi \big(B(x_i, r_i)\big)
%&\le& \begin{cases}
%8 \times 3^t \sum_{i\ge 1}  c_i \mu\big(B(x_{i}, r_{i})\big)^q \big( 2 r_{i}\big)^t & q\le 0 \\
%&\\
% 8\,\times 3^t\,  C \sum_{i\ge 1}  c_i \mu\big(B(x_{i},  r_{i})\big)^q \big( 2 r_{i}\big)^t & q>0 \;\; \text{and}\;\;  \mu\in {\mathcal P}_D(\X)  \\
%\end{cases}
\end{eqnarray*}
Now, taking the infimum over all weighted and centered $\delta$-coverings $\{ c_i, B_i\}_{i\ge 1}$ of $F$ proves that
$$
\HHH_{\mu, \delta}^{q, \xi}(F)\le8\,   C\; \W_{\mu,
\delta}^{q, \xi}(F).
$$
Letting $\delta \to 0$, to get
$$
(8\,  C)^{-1} \, \HHH_{\mu,0}^{q, \xi}(F)\le\;
\W_{\mu,0}^{q, \xi}(F)\le W_{\mu}^{q, \xi}(E).
$$
Since $F$ is arbitrarily chosen, we obtain $ (8\,  C)^{-1} \,
\HHH_{\mu}^{q, \xi}(E)\le W_{\mu}^{q, \xi}(E)$.%\le \HHH_{\mu}^{q, \xi}(E)$.
%This gives the desire result when $\HHH_{\mu}^{q, \xi}(E)=\infty$.
\end{proof}

\begin{example} \label{W=H=0}
In this example we take $\X$ to be satisfying the Besicovitch
covering theorem (Theorem \ref{BCT}).   Let $\mu\in {\mathcal
P}(\X),$ and $\xi \in \Phi(\X)$. Assume that there exist  $p, a,b \ge 0$
such that $q\ge 1-p$ and an increasing continuous  function
$\varphi: [0,+\infty]\to [0, +\infty]$ with
$\displaystyle\lim_{r\to0}\varphi(r)=0$ such that
   \begin{equation}\label{muphi}
a\;   \mu(B(x,r))^p \varphi(r)  \le  \xi(B(x,r)) \le b\;  \mu(B(x,r))^p \varphi(r)
   \end{equation}
   for all $x\in \X$  and $r >0$.
  This holds,  for example, if $$\xi(B(x,r))=r^t, \qquad t>0$$
   or $$\xi(B(x,r))= \nu(B(x,r))^p h(r)$$ with $\mu \sim \nu$ and $h$ is a Hausdorff function.
   Let  $R>0$, $0<\delta <R$ and $E\subseteq B(0, R)$. By the  Besicovitch covering theorem, we can extract from
 $$
\Big \{ B(x, r), \;\; x\in E, 0< r\le \delta \Big \}
$$
 a centered $\delta$-covering $\{B(x_i, r_i)\}$ of $E$ with the overlap controlled by a constant $\gamma$. Therefore,
 \begin{eqnarray*}
 \HHH_{\mu, \delta}^{q,\xi}(E) &\le& \sum_i \mu\big(B(x_i,r_i)\big)^q \xi(B(x_i,r_i)) \le b\;  \varphi(\delta) \Big(\sum_i \mu\big(B(x_i,r_i)\big)\Big)^{q+p}\\
 &\le&  b\; \varphi(\delta)  \gamma^q \Big( \mu \big(B(0, 2R)\big)\Big)^{q+p}.
 \end{eqnarray*}
It follows, by letting $\delta$ to  $0$, that for all $E\subseteq B(0, R)$, we have  $ \HHH_{\mu}^{q,\xi}(B(0, R)) = 0$ for all $R>0$. Thus
$\W_{\mu}^{q,\xi}(\X) = \HHH_{\mu}^{q,\xi}(\X)= 0.$
Therefore, for any  $q\ge 1-p$ and $E\subseteq \X$, we have
   $$
   \W_{\mu}^{q,\xi}(E) = \HHH_{\mu}^{q,\xi}(E)=0
   $$

\end{example}

%%%%%%%%%%%%%%%%%%%%%%%%%%%%%%%%%%%%%%%%%%%%%%%%%%%%%%%%%%%%%%%%%%%%%%%%%%%%%%%%%%%%%%%%
\subsection{Regularity of the weighted generalized  Hausdorff measure}\label{regular}
%%%%%%%%%%%%%%%%%%%%%%%%%%%%%%%%%%%%%%%%%%%%%%%%%%%%%%%%%%%%%%%%%%%%%%%%%%%%%%%%%%%%

In the following, we will prove that $\W_{\mu}^{q,\xi}$  is Borel
regular measure, that is, for all $E\subseteq \X$ there exists a
Borel set $B$ such that $$
\W_{\mu}^{q,\xi}(E)=\W_{\mu}^{q,\xi}(B).$$ This is done by the
construction of  new  measure $\widetilde\W^{q,\xi}_\mu$,  in a
similar manner to the  weighted and centered generalized Hausdorff
measure $\W^{q,\xi}_\mu$   but using the class of all covering balls
in the definition rather than the class of all centered balls. We
will prove that $\widetilde\W^{q,\xi}_\mu$ is Borel regular and
$\widetilde\W^{q,\xi}_\mu$ is comparable to $ \W^{q,\xi}_\mu$ (see
\eqref{WW}).
\\

%A similar result can be proven for $\HHH_\mu^{q, \xi}$.
\begin{theorem}\label{th_regular}
Let $\mu\in\mathcal{P}_D(\X)$, $q\in\mathbb{R}$ and $\xi\in\Phi_D(\X)$.
Then $\W_{\mu}^{q,\xi}$ and $\HHH_{\mu}^{q,\xi}$ are  Borel regular.
Moreover, if $q\le 0$, then these measures are  Borel regular  even
if $\mu$ is not not blanketed.
\end{theorem}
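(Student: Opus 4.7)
The plan is to introduce an auxiliary measure $\widetilde{\W}_\mu^{q,\xi}$ (and analogously $\widetilde{\HHH}_\mu^{q,\xi}$) by repeating the construction of Section \ref{Prel} but allowing the covering balls to be arbitrary closed balls, not required to be centered in the set being covered. The crucial simplification is that with non-centered covers the pre-measure $\widetilde{\W}_{\mu,\delta}^{q,\xi}$ is monotone in the set---any weighted $\delta$-cover of a superset restricts to one of a subset---so the supremum over subsets in the definition becomes redundant and $\widetilde{\W}_\mu^{q,\xi}(E)=\widetilde{\W}_{\mu,0}^{q,\xi}(E)$ for every $E$. Consequently the construction in part (2) of Theorem \ref{metric_W} carries over verbatim and yields Borel regularity of $\widetilde{\W}_\mu^{q,\xi}$: for each $E$ there is a Borel set $B\supseteq E$ with $\widetilde{\W}_\mu^{q,\xi}(B)=\widetilde{\W}_\mu^{q,\xi}(E)$, and similarly for $\widetilde{\HHH}_\mu^{q,\xi}$.

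Next I would establish the comparability \eqref{WW} between $\widetilde{\W}_\mu^{q,\xi}$ and $\W_\mu^{q,\xi}$. The inequality $\widetilde{\W}_\mu^{q,\xi}(E)\le\W_\mu^{q,\xi}(E)$ is immediate since centered covers form a subclass of all covers. For the reverse bound, the idea is to take a near-optimal weighted $\delta$-cover $\{(c_i,B(y_i,r_i))\}_i$ of $E$ by arbitrary balls and, for each $i$ with $B(y_i,r_i)\cap E\neq\emptyset$, pick $x_i\in B(y_i,r_i)\cap E$ and replace $B(y_i,r_i)$ by $B(x_i,2r_i)$. The inclusion $B(y_i,r_i)\subseteq B(x_i,2r_i)$ guarantees that the new family is still a weighted and centered $2\delta$-cover of $E$, while $\xi\in\Phi_D$ together with $B(x_i,r_i)\subseteq B(y_i,2r_i)$ yields $\xi(B(x_i,2r_i))\le K\xi(B(x_i,r_i))\le K\xi(B(y_i,2r_i))\le K^{2}\xi(B(y_i,r_i))$. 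For the $\mu^{q}$ factor, when $q>0$ the hypothesis $\mu\in\mathcal{P}_D(\X)$ yields $\mu(B(x_i,2r_i))\le P_{2}(\mu)^{2}\mu(B(y_i,r_i))$ by the same two-step chain, whereas for $q\le 0$ the single inclusion $B(y_i,r_i)\subseteq B(x_i,2r_i)$ already gives $\mu(B(x_i,2r_i))^{q}\le\mu(B(y_i,r_i))^{q}$; this is precisely why the blanketed hypothesis on $\mu$ can be dropped in that range, giving the second assertion of the theorem.

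Borel regularity of $\W_\mu^{q,\xi}$ then follows by chaining: given $E$, Borel regularity of $\widetilde{\W}_\mu^{q,\xi}$ supplies a Borel $B\supseteq E$ with $\widetilde{\W}_\mu^{q,\xi}(B)=\widetilde{\W}_\mu^{q,\xi}(E)$, and combining monotonicity with \eqref{WW} yields $\W_\mu^{q,\xi}(E)\le\W_\mu^{q,\xi}(B)\le C\,\widetilde{\W}_\mu^{q,\xi}(B)=C\,\widetilde{\W}_\mu^{q,\xi}(E)\le C\,\W_\mu^{q,\xi}(E)$ for $C=C(K,P_{2}(\mu),q)$. In particular $\W_\mu^{q,\xi}(E)$ and $\W_\mu^{q,\xi}(B)$ coincide at $0$ and $\infty$ and agree up to the universal factor $C$ otherwise, which is the form of Borel regularity claimed. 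The argument for $\HHH_\mu^{q,\xi}$ is identical, with single-ball (unweighted) centered covers in place of weighted ones.

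The principal obstacle I expect lies in the ball-replacement step of the comparability proof: one must track simultaneously the constants coming from $\xi\in\Phi_D$ and from the blanketed hypothesis on $\mu$, and arrange the two inclusions $B(y_i,r_i)\subseteq B(x_i,2r_i)$ and $B(x_i,r_i)\subseteq B(y_i,2r_i)$ so that the resulting factor is finite and uniform in the chosen cover. A secondary technical point is to justify that an $x_i\in B(y_i,r_i)\cap E$ can be picked for every ball genuinely contributing to the cover, discarding those that miss $E$ without destroying the covering condition \eqref{weight}; this is where the flexibility of the weighted cover definition is used.
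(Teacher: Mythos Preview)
Your overall strategy---introduce the non-centered auxiliary $\widetilde{\W}_\mu^{q,\xi}$, note that its pre-measure is already monotone so that the supremum over subsets is redundant and part~(2) of Theorem~\ref{metric_W} yields its Borel regularity directly, and then establish the two-sided comparability \eqref{WW} by replacing each covering ball $B(y_i,r_i)$ with a centered one $B(x_i,2r_i)$---is precisely the route the paper takes. You in fact supply more detail on the ball-replacement step than the paper does; the paper simply records \eqref{WW} and cites \cite{Sch}. Your handling of the $q\le 0$ case (one inclusion suffices, so the blanketed hypothesis on $\mu$ is unnecessary) also matches the paper's remark.

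The gap is in your final paragraph. ``Borel regular'' means that for every $E$ there exists a Borel $B\supseteq E$ with $\W_\mu^{q,\xi}(B)=\W_\mu^{q,\xi}(E)$ \emph{exactly}; your chaining only produces
\[
\W_\mu^{q,\xi}(E)\le\W_\mu^{q,\xi}(B)\le C\,\widetilde{\W}_\mu^{q,\xi}(B)=C\,\widetilde{\W}_\mu^{q,\xi}(E)\le C\,\W_\mu^{q,\xi}(E),
\]
and the assertion that this ``is the form of Borel regularity claimed'' is false. Comparability of two outer measures does not, by itself, transfer Borel regularity from one to the other in the range $0<\W_\mu^{q,\xi}(E)<\infty$. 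The missing argument---which the paper does not spell out either, but delegates to Lemma~7 and Theorem~5 of \cite{Sch}---combines \eqref{WW} with the pre-measure regularity of Theorem~\ref{metric_W}(2) and the subadditivity of $\W_{\mu,0}^{q,\xi}$ (Remark~\ref{metric_W_0}) to pass from a Borel hull at the level of $\W_{\mu,0}^{q,\xi}$ to exact equality for $\W_\mu^{q,\xi}$. Your proposal has the correct scaffolding but stops one step short of the stated conclusion.
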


\begin{proof}
 We will prove the result for $\W_{\mu}^{q,\xi}$. The proof can be written in a similar way for $\HHH_{\mu}^{q,\xi}$. Let $(c_i, B_i)_{i\ge 1}$ be a  weighted cover of a set $E$. For $\delta >0$, $ (c_i ; B_i)_{i\ge 1}$ is said to be  a weighted $\delta-$cover of $E$ if,   for all $i\ge 1$,  $\diam B_i \le 2 \delta$.  In this case, the center of $B_i$ does not necessarily belong to $E$.  We denote  the family of all weighted $\delta$-covers of $E$ by $ \widetilde\Upsilon_\delta(E)$. For $\mu\in\mathcal{P}(\X)$,  $q\in\mathbb{R}$, $\xi\in \Phi(\X)$,  $E \subseteq{\X}$ and $\delta>0$   we write
\begin{eqnarray}\label{ww-delta}
&&  {  \widetilde{\mathcal W}}_{\mu,\delta}^{q,\xi}(E)= \inf\left\{\sum_i c_i \mu(B_i)^q \xi(B_i); \;\; (c_i, B_i)_i\in \widetilde\Upsilon_\delta(E) \right\},
 \end{eqnarray}
 if $E\neq \emptyset$ and $\widetilde { \mathcal{W}}_{\mu,\delta}^{q,\xi}(\emptyset) = 0$.  Now, we define
 \begin{eqnarray*}
 &&\widetilde{ \mathcal{W}}_{\mu}^{q,\xi}(E)= \sup_{\delta >0} \widetilde{ \mathcal{W}}_{\mu,\delta}^{q,\xi}(E)= \lim_{\delta\to 0} \widetilde{ \mathcal{W}}_{\mu,\delta}^{q,\xi}(E)\\
\end{eqnarray*}

Now, it is not difficult to prove that, $\widetilde{\W}_\mu^{q,\xi}$
is a metric outer measure on \, $\X$ and thus a measure on the Borel
family of subsets of \; $\X$.  In addition,  $\widetilde
\W_{\mu}^{q,\xi}$ is Borel  regular. The proof is  straightforward
and mimics that in Theorem \ref{metric_W}. Since  there exists a positive constant
$\alpha$ such that, for all set $E\subseteq \X$,
\begin{equation}\label{WW}
\widetilde{\W}_\mu^{q,\xi} (E)  \le {\W}_{\mu,0}^{q,\xi}(E)\;\;  \text{and} \; \; {\W}_\mu^{q,\xi}(E) \le \alpha \, \widetilde{\W}_\mu^{q,\xi}(E),
\end{equation}
 then ${\W}_\mu^{q,\xi}$ is Borel regular (see Lemma 7 and
Theorem 5 \cite{Sch} for the key ideas  of the proof).  Moreover, if
$q\le 0$, then  \eqref{WW} holds even for $\mu$ not blanketed.

\end{proof}

%%%%%%%%%%%%%%%%%%%%%%%%%%%%%%%%%%%%%%%%%%%%%%%%%%%%%%%%%%%%%%%%%%%%%%%%%%%%%%%%%%%%%%%%%%%%%%%%%%%%%%%%%%%%%%%%%%%%
\section{Proofs of main results}\label{proofs}
%%%%%%%%%%%%%%%%%%%%%%%%%%%%%%%%%%%%%%%%%%%%%%%%%%%%%%%%%%%%%%%%%%%%%%%%%%%%%%%%%%%%%%%%%

%%%%%%%%%%%%%%%%%%%%%%%%%%%%%%%%%%%%%%%%%%%%%%%%%%%%%%%%%%%%%%%%%%%%%%%%%%%%%%%%%%%%%%%%%%%%%%%%%%%%%%%%%%%%%
\subsection{Proof of Theorem A}\label{proofA}
%%%%%%%%%%%%%%%%%%%%%%%%%%%%%%%%%%%%%%%%%%%%%%%%%%%%%%%%%%%%%%%%%%%%%%%%%%%%%%%%%%%%%%%%%%%%%%%%%%%%%%%%%%%%%%%%

Let $(\X, \rho)$ and $(\X', \rho')$ be two  separable metric spaces. The cartesian product space $\X\times \X'$ is defined with the metric $ \rho\times\rho'$ given by
\begin{equation*}
\rho\times\rho'\Big( (x_1, x'_1), (x_2, x'_2)\Big)= \max\Big\{\rho(x_1,x_2), \rho'(x'_1,x'_2)\Big).
\end{equation*}
Let $\xi  \in {\Phi}(\X)$ and $\xi'\in \Phi(\X')$. Let's define the function $\xi_0$ on $\mathcal{B}(\X)\times\mathcal{B}(\X')$ by :
$$
\xi_0(B\times B') = \xi(B) \; \xi'(B').
$$
%\subsection{Proof of Theorem A}
 The following lemma will be useful to study the zero-infinite case.
\begin{lemma}\label{zero-infinite}
Let $q \in \R$, $\delta >0,$ $E\subseteq \X$ and $E'\subseteq \X'$. Assume that $(\X, \xi, \mu)$ and $(\X', \xi', \nu)$ satisfy \eqref{hyp1}.
\begin{enumerate}
\item   If   $ {\W}_{\mu, \delta}^{q, \xi}(E) =\infty$ and $ {\W}_{\nu, \delta}^{q, \xi'}(E')= 0$ then ${\W}_{\mu\times \nu, \delta}^{q, \xi_0}(E\times E') =0.$
\item If   $ {\HHH}_{\mu, \delta}^{q, \xi}(E) =\infty$ and $ {\HHH}_{\nu, \delta}^{q, \xi'}(E')= 0$ then ${\HHH}_{\mu\times \nu, \delta}^{q, \xi_0}(E\times E') =0.$
\end{enumerate}
\end{lemma}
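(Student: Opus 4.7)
The plan is to exploit the asymmetry in the hypothesis: although $\W_{\mu,\delta}^{q,\xi}(E) = \infty$, assumption \eqref{hyp1} provides a countable decomposition of $\X$ into balls of \emph{finite} $\mu(\cdot)^q\xi(\cdot)$-mass. Restricting these to $E$ lets me write $E$ as a countable union of pieces, each trapped inside a single ``nice'' ball. Multiplying such a ball with an arbitrarily light weighted cover of $E'$ (which exists because $\W_{\nu,\delta}^{q,\xi'}(E') = 0$) produces a weighted cover of the corresponding slab $(E\cap \hat B_i)\times E'$ whose total $\xi_0$-weight can be made as small as desired. Summing over $i$ by countable subadditivity of $\W_{\mu\times\nu,\delta}^{q,\xi_0}$ then yields the result.

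Concretely, for part (1) I would first apply \eqref{hyp1} at scale $\delta/2$ to obtain a countable family $(B_i = B(x_i,r_i))_i$ covering $\X$ with $2r_i\le\delta$ and $\mu(B_i)^q\xi(B_i)<\infty$. For each $i$ with $B_i\cap E\neq\emptyset$, pick $y_i\in B_i\cap E$ and set $\hat B_i := B(y_i,2r_i)$; then $B_i\subseteq \hat B_i$, $\hat B_i$ is centered in $E$, $\diam\hat B_i\le 2\delta$, and I take $\hat M_i := \mu(\hat B_i)^q\xi(\hat B_i)<\infty$. Given $\epsilon>0$, choose $(c'_j,B'_j)_j\in\Upsilon_\delta(E')$ with $\sum_j c'_j\nu(B'_j)^q\xi'(B'_j)<\epsilon$. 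Then $\{(c'_j,\hat B_i\times B'_j)\}_j$ is a weighted centered $\delta$-cover of $(E\cap \hat B_i)\times E'$ in $\X\times\X'$, since for every $(x,x')$ in that product
\[
\sum_j c'_j\,\chi_{\hat B_i\times B'_j}(x,x') \;=\; \chi_{\hat B_i}(x)\sum_j c'_j\,\chi_{B'_j}(x')\;\ge\;1,
\]
each $\hat B_i\times B'_j$ is centered at $(y_i,y'_j)\in E\times E'$, and $\diam(\hat B_i\times B'_j)\le 2\delta$ in the sup metric. Its associated sum is
\[
\sum_j c'_j\,(\mu\times\nu)(\hat B_i\times B'_j)^q\,\xi_0(\hat B_i\times B'_j)\;=\;\hat M_i\sum_j c'_j\,\nu(B'_j)^q\xi'(B'_j)\;<\;\hat M_i\,\epsilon.
\]
Letting $\epsilon\to 0$ forces $\W_{\mu\times\nu,\delta}^{q,\xi_0}((E\cap\hat B_i)\times E')=0$ for every $i$. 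Since $E\times E'=\bigcup_i (E\cap\hat B_i)\times E'$ and the functional $\W_{\mu\times\nu,\delta}^{q,\xi_0}$ is countably subadditive (concatenation of weighted centered $\delta$-covers is again such a cover, exactly as in the proof of Theorem~\ref{metric_W}), I conclude $\W_{\mu\times\nu,\delta}^{q,\xi_0}(E\times E')=0$.

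Part (2) is handled by literally the same argument, but restricting every cover to unit weights $c_i\equiv 1$: the role of weighted centered $\delta$-covers is then played by ordinary centered $\delta$-covers, and the product construction $(\hat B_i\times B'_j)_j$ works verbatim for the $\HHH$-functional. The main obstacle is the very first step: ensuring that after recentering the cover ball at a point $y_i\in E$, the quantity $\mu(\hat B_i)^q\xi(\hat B_i)$ is still finite. Hypothesis \eqref{hyp1} as written only gives this for the original, possibly off-$E$ centers; one therefore has to either choose the \eqref{hyp1}-cover with centers already in $E$ (which is consistent with the usage elsewhere in the paper) or invoke a one-step doubling-type bound $\mu(\hat B_i)^q\xi(\hat B_i)\le K\,\mu(B_i)^q\xi(B_i)$. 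Once this is available, the remainder of the argument is a routine multiplication of covers.
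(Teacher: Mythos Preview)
Your proof is correct and follows essentially the same approach as the paper's: multiply a cover of $E$ by finite-mass balls (supplied by hypothesis~\eqref{hyp1}) against arbitrarily light covers of $E'$; the paper combines these directly into a single product cover via the $\epsilon/2^i$ trick rather than appealing to countable subadditivity of $\W_{\mu\times\nu,\delta}^{q,\xi_0}$, but that is purely organisational. The re-centering issue you explicitly flag at the end is also left implicit in the paper's argument, which simply asserts the existence of a weighted \emph{centered} $\delta$-cover of $E$ with each term $c_i\mu(B_i)^q\xi(B_i)$ finite.
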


\begin{proof} We will only prove the first assertion. The other is a direct consequence, by taking each of the weights $c_i$ and $c'_{i_j}$ below to  be unity.
Let's consider $(c_i,B_i)_{i\geq 1}$ a weighted and centered $\delta$-cover of $E$ such that $(c_i \mu(B_i)^q \xi(B_i))$ is finite for each $i\geq 1$. Then, for $\epsilon>0$, we may choose for each $i$, a weighted  and centered $\delta$-cover $(c'_{i_j},B'_{i_j})_{j\geq 1}$ of $E'$ such that
\begin{equation}\label{zero_infty}
c_i\left(\mu(B_i)\right)^q\xi(B_i)\displaystyle\sum_{j=1}^\infty c'_{i_j}\left(\nu(B'_{i_j})\right)^q\xi'(B'_{i_j})<\frac{\epsilon}{2^i}
\end{equation}

For each $(x,x')\in E\times E'$, we have
\begin{align*}
\displaystyle\sum_{\stackrel{i,j=1}{ (x,x') \in B_i\times B'_{i_j}}}^\infty (c_ic'_{i_j})&=\sum_{\stackrel{i=1}{ x \in B_i}}^\infty\big(c_i\sum_{\stackrel{j=1}{ x' \in  B'_{i_j}}}^\infty c'_{i_j}\big)\\
&\geq\sum_{\stackrel{i=1}{ x \in B_i}}^\infty c_i \geq 1\,.
\end{align*}

Consequently, $(c_ic'_{i_j},B_i\times B'_{i_j})_{i,j\geq 1}$ is a weighted  and centered $\delta$-cover of $E\times E'$. Using (\ref{zero_infty}), we get :
\begin{align*}
{\W}_{\mu\times\nu, \delta}^{q,\xi_0}(E\times E')&\leq\displaystyle\sum_{i,j=1}^\infty c_i c'_{i_j}\mu\times\nu(B_i\times B'_{i_j})^q\xi_0(B_i\times B'_{i_j})\\
&=\sum_{i,j=1}^\infty c_ic'_{i_j}\mu(B_i)^q\nu(B'_{i_j})^q\xi(B_i)\xi'(B'_{i_j})\\
&=\sum_{i=1}^\infty c_i\mu(B_i)^q\xi(B_i)\Big(\sum_{j=1}^\infty c'_{i_j}\nu(B'_{i_j})^q\xi'(B'_{i_j})\Big)\\
&< \sum_{i=1}^\infty \frac{\epsilon}{2^i}= \epsilon\,.
\end{align*}
\end{proof}

\begin{proposition}\label{prop_equality}
Let $q\in \R$ and assume that $(\X, \xi,\mu)$ and $(\X', \xi', \nu)$ satisfy \eqref{hyp1}, then for any $E\subseteq \X$ and $E'\subseteq \X'$, we have :
\begin{equation}\label{equality_W}
{\W}_{\mu\times \nu, \delta}^{q, \xi_0}(E\times E') = {\W}_{\mu, \delta}^{q, \xi}(E)  {\W}_{\nu, \delta}^{q, \xi'}(E')\,,\quad\ \forall\ \delta>0\,.
\end{equation}
\end{proposition}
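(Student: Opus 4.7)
I would prove the two inequalities separately and then glue them, using the convention $0 \times \infty = 0$ and invoking Lemma \ref{zero-infinite} for the genuinely mixed cases. The key structural remark is that in the sup-metric on $\X \times \X'$ a closed ball decomposes as $B((x,x'),r) = B(x,r) \times B(x',r)$, so a closed ball centered in $E \times E'$ is exactly a product $B \times B'$ of equal-radius balls centered in $E$ and $E'$ respectively, with the same diameter.

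\textbf{Upper bound.} Assume first that both factors on the right side of \eqref{equality_W} are finite (note that \eqref{hyp1} ensures they are even well-defined). Given $\epsilon > 0$, pick weighted and centered $\delta$-covers $(c_i, B_i)_i$ of $E$ and $(c'_j, B'_j)_j$ of $E'$ whose costs are within $\epsilon$ of $\W_{\mu,\delta}^{q,\xi}(E)$ and $\W_{\nu,\delta}^{q,\xi'}(E')$. Then $(c_i c'_j,\, B_i \times B'_j)_{i,j}$ is a weighted and centered $\delta$-cover of $E \times E'$: each ball lies in $E \times E'$ with diameter at most $2\delta$, and for $(x,x') \in E \times E'$ the covering weights factorize as $\bigl(\sum_i c_i \chi_{B_i}(x)\bigr)\bigl(\sum_j c'_j \chi_{B'_j}(x')\bigr) \ge 1$. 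The cost factorizes identically, so letting $\epsilon \to 0$ gives the $\le$ direction whenever both factors are finite. The purely zero cases follow by $0 \cdot (\text{finite}) = 0$, and the zero-infinite case is handled by Lemma \ref{zero-infinite}(1).

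\textbf{Lower bound (the core step).} Let $(c_k, D_k)_{k \ge 1}$ be any weighted and centered $\delta$-cover of $E \times E'$; write $D_k = B_k \times B'_k$ with $B_k$ centered in $E$ and $B'_k$ centered in $E'$. For each fixed $x' \in E'$ and any $x \in E$,
\[
\sum_{\{k \,:\, x' \in B'_k,\; x \in B_k\}} c_k \;=\; \sum_{\{k \,:\, (x,x') \in D_k\}} c_k \;\ge\; 1,
\]
so $\{(c_k, B_k) : x' \in B'_k\}$ is a weighted and centered $\delta$-cover of $E$. Hence for every $x' \in E'$,
\[
\sum_{\{k \,:\, x' \in B'_k\}} c_k\, \mu(B_k)^q\, \xi(B_k) \;\ge\; \W_{\mu,\delta}^{q,\xi}(E).
\]
When $0 < \W_{\mu,\delta}^{q,\xi}(E) < \infty$, dividing through shows that $\bigl(c_k \mu(B_k)^q \xi(B_k)/\W_{\mu,\delta}^{q,\xi}(E),\, B'_k\bigr)_k$ is itself a weighted and centered $\delta$-cover of $E'$. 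Comparing its cost against $\W_{\nu,\delta}^{q,\xi'}(E')$ and rearranging yields
\[
\W_{\mu,\delta}^{q,\xi}(E)\, \W_{\nu,\delta}^{q,\xi'}(E') \;\le\; \sum_k c_k\, (\mu \times \nu)(D_k)^q\, \xi_0(D_k),
\]
and taking the infimum over $(c_k, D_k)$ completes this case.

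\textbf{Degenerate cases and expected obstacle.} If one factor is $0$ and the other finite, both sides vanish by the upper bound. The mixed zero-infinite case is exactly Lemma \ref{zero-infinite}(1), so both sides are $0$ under the convention. The remaining case, one factor $\infty$ and the other strictly positive, is handled by contradiction: a finite-cost weighted centered $\delta$-cover of $E \times E'$ would, via the slicing argument above, produce for some $x'$ with positive contribution a weighted and centered $\delta$-cover of $E$ with finite cost, contradicting $\W_{\mu,\delta}^{q,\xi}(E) = \infty$. The main obstacle is precisely this bookkeeping at the boundary of the convention $0 \times \infty = 0$, and ensuring that assumption \eqref{hyp1} is exactly what is needed to rule out vacuous zero-infinite pathologies in the direct construction of the product cover; the slicing argument itself is clean once one exploits that the sup-metric balls are exactly equal-radius products.
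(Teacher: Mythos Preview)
Your proof follows essentially the same route as the paper's: the upper bound via the product cover $(c_i c'_j,\, B_i\times B'_j)$ is identical, and your ``slicing'' lower bound is the paper's reweighting argument with the roles of $E$ and $E'$ interchanged. The only substantive difference is that the paper, instead of dividing by the exact value $\W_{\mu,\delta}^{q,\xi}(E)$, fixes an arbitrary $p$ with $0<p<\W_{\nu,\delta}^{q,\xi'}(E')$, sets $u_i=c_i\,\nu(B'_i)^q\xi'(B'_i)/p$, and only at the very end lets $p\uparrow \W_{\nu,\delta}^{q,\xi'}(E')$. This buys the paper a uniform treatment of all the non zero--infinite cases at once, including the case where one factor equals $+\infty$.

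That last case is where your write-up has a small gap. Your contradiction argument asserts that a finite-cost weighted cover of $E\times E'$ would produce, for some $x'$, a sliced weighted cover of $E$ \emph{with finite cost}. But the cost of the sliced cover is $\sum_{k:\,x'\in B'_k} c_k\,\mu(B_k)^q\xi(B_k)$, which carries no factors $\nu(B'_k)^q\xi'(B'_k)$; there is no reason this sum should be finite merely because the total product cost is. The cleanest repair is exactly the paper's device: assume only that the other factor is strictly positive, pick any finite $p$ below it, and run your core step with $p$ in place of the exact value (equivalently, swap the roles of $E$ and $E'$ so that you divide by the factor lying in $(0,\infty)$). With that adjustment your argument is complete and coincides with the paper's.
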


\begin{proof}
By Lemma \ref{zero-infinite}, we may suppose that  ${\W}_{\mu, \delta}^{q, \xi}(E) $ and $ {\W}_{\nu, \delta}^{q, \xi'}(E')$ are not one zero and the other infinite. We start by proving that:
\begin{equation}\label{inf_delta}
{\W}_{\mu\times \nu, \delta}^{q, \xi_0}(E\times E') \ge  {\W}_{\mu, \delta}^{q, \xi}(E)  {\W}_{\nu, \delta}^{q, \xi'}(E')\,.
\end{equation}
%Let $E_1\subseteq E$,  $E'_1\subseteq E'$ and $\delta >0$. First we will prove
%\begin{equation}\label{inf_delta}
%{\W}_{\mu\times \nu, \delta}^{q, \\\xi_0}(E_1\times E_1') \ge  {\W}_{\mu, \delta}^{q, \\xi}(E_1)  {\W}_{\nu, \delta}^{q, \\xi'}(E_1')
%\end{equation}
Without loss of generality, we may suppose that ${\W}_{\mu\times \nu, \delta}^{q, \xi_0}(E\times E')$ is finite and   $ {\W}_{\nu, \delta}^{q, \xi'}(E')$ is positive. Let $0< p<  {\W}_{\nu, \delta}^{q, \xi'}(E')$ and let $(c_i, B_i\times B_i')$ be a weighted and centered  $\delta$-cover of $E\times E'$. It follows that, for any $x'\in E'$, we have
$$
\sum_{ \stackrel{i=1}{x'\in B'_i}}^{\infty} c_i = \sum_{\stackrel{i=1}{ (x,x') \in B_i\times B'_i}}^{\infty} c_i \ge 1.
$$
Therefore, $(c_i, B_i')_{i\ge 1}$ is a weighted and centered $\delta$-cover of $E'$. We set, for each $i$,  $$u_i = c_i\,\nu(B'_i)^q\,  \xi'(B_i') /p ,$$ then
$$
p < \sum_{i=1}^{\infty} c_i\nu(B')^q \xi'(B'_i)\quad \text{and}\quad  \sum_{\stackrel{i=1}{ x\in  B_i}}^{\infty} u_i >1.
$$
Since this  holds for each $x\in E$, we get that $(u_i, B_i)_{i\ge 1}$ is a weighted and centered $\delta$-cover of $E$ and :
$$
\sum_{i=1}^{\infty} c_i \mu\times \nu(B_i\times B'_i)^q \xi_0(B_i\times B'_i) = p \sum_{i=1}^{\infty} u_i  \mu(B_i)^q \xi(B_i)\ge p   {\W}^{q, \xi}_{\mu,\delta}(E)
$$
Since $(c_i, B_i\times B_i')$ and $p$ are  arbitrarily chosen, we obtain \eqref{inf_delta}. \\
%and then
%$$
%{\W}^{q, \\\xi_0}_{\mu\times \nu,0}(E\times E') \ge {\W}^{q, \\xi}_{\mu,0}(E) {\W}^{q, \\xi'}_{\nu,0}(E')
%$$

Now, we will prove that
\begin{equation}\label{sup_delta}
{\W}^{q, \xi_0}_{\mu\times \nu, \delta} (E\times E') \le  {\W}^{q, \xi}_{\mu, \delta}(E) {\W}_{\nu, \delta}^{q, \xi'}(E').
\end{equation}
%Let $E_1\subseteq E$,  $E'_1\subseteq E'$ and $\delta >0$. First we will prove
%\begin{equation}\label{sup_delta}
%{\W}^{q, \\\xi_0}_{\mu\times \nu} \delta(E_1\times E'_1) \le {\W}^{q, \\xi}_{\mu, \delta} (E_1)  {\W}^{q, \\xi'}_{\nu,\delta}(E'_1)
%\end{equation}
We may assume that ${\W}^{q, \xi}_{\mu,\delta}(E)  {\W}^{q, \xi'}_{\nu,\delta} (E')$ is finite. Let $(c_i, B_i)_{i\ge 1}$ and $(c'_i, B'_i)_{i\ge 1}$  be weighted and centered $\delta$-covers for $E$ and $E'$ respectively. Then for $(x, x') \in E\times E'$ we have
$$
\sum_{\stackrel{i,j\geq 1}{ (x,x') \in B_i\times B'_j}} c_ic'_j = \Big( \sum_{\stackrel{i=1}{ x\in B_i}}^{\infty} c_i \Big) \Big( \sum_{\stackrel{j=1}{ x'\in B'_j}}^{\infty} c'_j \Big)  \ge 1.
$$
so that $(c_i c'_j, B_i\times B'_j)_{i ,j \ge 1}$ is a weighted and centered $\delta$-cover for $E\times E'$. As a consequence, we have
\begin{eqnarray*}
{\W}^{q, \xi_0}_{\mu\times \nu, \delta} (E\times E') &\le& \sum_{i,j=1}^{\infty} c_i c'_j \mu\times\nu(B_i\times B'_j)^q \xi_0(B_i\times B'_j)\\
&=& \sum_{i,j=1}^{\infty} c_i c'_j \mu(B_i)^q \nu( B'_j)^q \xi(B_i) \xi'(B'_j)\\
&=&\Big(  \sum_{i=1}^{\infty}   c_i  \mu(B_i)^q   \xi(B_i)   \Big) \Big( \sum_{j=1}^{\infty}   c'_j \nu(B_i)^q  \xi'(B_i')  \Big).
\end{eqnarray*}
Then \eqref{sup_delta} holds and this ends the proof.
\end{proof}

Now, we are able to give the proof of Theorem A. Let $E_1\subseteq E$ and $E'_1\subseteq E'$. By letting  $\delta \to 0$ in (\ref{equality_W}), we get :
\begin{align*}
{\W}_{\mu\times\nu,0}^{q,\xi_0}(E_1\times E'_1)&={\W}_{\mu,0}^{q, \xi}(E_1)  {\W}_{\nu,0}^{q, \xi'}(E'_1) \leq{\W}_{\mu}^{q, \xi}(E)  {\W}_{\nu}^{q, \xi'}(E')\,.
\end{align*}
  Therefore
 \[{\W}_{\mu\times\nu}^{q,\xi_0}(E\times E')\leq {\W}_\mu^{q,\xi}(E){\W}_\nu^{q,\xi'}(E')\,.\]
Moreover,
 \begin{align*}
{\W}_{\mu\times\nu}^{q,\xi_0}(E\times E')&\geq {\W}_{\mu\times\nu,0}^{q,\xi_0}(E_1\times E'_1)={\W}_{\mu,0}^{q,\xi}(E_1){\W}_{\nu,0}^{q,\xi'}(E'_1)\,
\end{align*}
and then, by arbitrariness of $E_1$ and $E'_1$,
 \begin{equation}\label{inequality_W}
 {\W}_{\mu\times\nu}^{q,\xi_0}(E\times E')\geq {\W}_{\mu}^{q,\xi}(E){\W}_{\nu}^{q,\xi'}(E')\,
 \end{equation}
as required.

\begin{remark} \label{rem}
\begin{enumerate}
\item  Under our convention $\infty \times 0= 0\times \infty=0$, the inequality
 \eqref{inequality_W} is true without the assumption \eqref{hyp1}.
\item Let $(\X,\xi)$ and $(\X', \xi')$ be two separable metric  spaces that satisfy :

 Any set $E$ of $\X$ and any set $E'$ of $\X'$ such that ${\HHH}_{\mu}^{q, \xi}(E)=0$ and ${\HHH}_{\mu}^{q, \xi'}(E')=0$,
  can be covered by countable numbers of balls $(B_i)_i$ and $(B'_j)_j$ of arbitrarily small diameters such that
 $\mu(B_i)^q\xi(B_i)=\mu(B'_j)^q\xi'(B'_j)=0$ for each $i$ and $j$.\\
\\
\noindent Then  the previous results remain true. Indeed, we only have to verify Lemma \ref{zero-infinite} under the previous hypthesis  instead of \eqref{hyp1}. In this case, we can choose a weighted and centered $\delta$-cover $(c_i, B_i)_i$ and $(c_j', B'_j)_j$ of $E$ and $E'$ such that, for each $j$, we have $\nu(B'_j)^q \xi'(B'_j)=0$. As in the proof of Lemma \ref{zero-infinite}, $(c_ic'_j, B_i\times B'_j)$ is a weighted and centered $\delta$-cover of $E\times E'$. thereby,
$$
{\W}_{\mu\times \nu}^{q, \xi}(E\times E') \le \sum_{i,j=1}^{\infty} c_ic'_j \mu(B_i)^q \xi(B_i)\nu(B'_j)^q \xi'(B'_j)=0
$$
which yield the desired result.
\end{enumerate}
\end{remark}

%%%%%%%%%%%%%%%%%%%%%%%%%%%%%%%%%%%%%%%%%%%%%%%%%%%%%%%%%%%%%%%%%%%%%%%%%%
\subsection{Proof of Theorem B}\label{application}
%%%%%%%%%%%%%%%%%%%%%%%%%%%%%%%%%%%%%%%%%%%%%%%%%%%%%%%%%%%%%%%%%%%%%%%%%%%%%%%%%%%%%%%%%%%%%%%%%%%%%%%%%%%%%%%%%%%%%%%%

A set  $E$ is  said to be regular if ${\W}_\mu^{q, \xi}(E) =  {\HHH}_\mu^{q, \xi}(E)$. Therefore, it is interesting to give some characterizations of such sets.
%In this section,  we let $\X$ to  be  separable metric space   such that  the Besicovitch covering theorem holds. !!!!!!!!!!!!! the Euclidean space $\R^d$. Therefore,  any regular and finite  Borel measure $\mu$ on $\R^d$ satisfies, for all $E\subseteq \R^d$,
%\begin{eqnarray*}
%\mu(E)&=&\sup\Big\{\mu(F),  \; F\subseteq E,\;\; E \; \text{closed}\; \Big\}\\
%&=&\inf\Big\{\mu(V),  \; E\subseteq V,\;\; V \; \text{open}\; \Big\},
%\end{eqnarray*}
%the reader may be referred  to \cite{Lorenz} for more details.
 In this section, we will be concerned to give a sufficient condition to get the regularity of $E.$
 %on the regularity equality on $\HHH_\mu^{q,\xi}$ and $\W_\mu^{q, \xi}$ (Theorem \ref{W=H}).  %First, let us observe  this  particular  case.
 %  when    $q\ge 1$ and  $t\ge 0$.
  %Now, in order to prove our main result in this section,
 We start by an  auxiliary result which is interesting in itself. We define, for $\epsilon >0$, the set  $$
\Lambda_{\mu, \epsilon}^{q,\xi}=\Big\{ x\in \X,\; \exists \delta_x>0,\; \HHH_\mu^{q,\xi}\big(B(x, r)\big) \le (1+\epsilon) \mu(B(x,r))^q \xi(B(x,r)),\; \forall r\le \delta_x\Big\}.
$$

\begin{lemma} \label{2.14}
Let $q\in \R, \xi\in \Phi_D(\X)$, $\epsilon >0$ and $\mu\in {\mathcal P}_D(\X)$. Assume that $\HHH_\mu^{q,\xi}(\X)<\infty$ then
$$
\HHH_\mu^{q,\xi}(\X) = \HHH_\mu^{q,\xi}(\Lambda_{\mu, \epsilon}^{q,\xi}).
$$
\end{lemma}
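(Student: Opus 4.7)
The plan is to prove that the complement $A := \X \setminus \Lambda_{\mu,\epsilon}^{q,\xi}$ carries no $\HHH_\mu^{q,\xi}$-mass; then by subadditivity
$\HHH_\mu^{q,\xi}(\X) \le \HHH_\mu^{q,\xi}(\Lambda_{\mu,\epsilon}^{q,\xi}) + \HHH_\mu^{q,\xi}(A) = \HHH_\mu^{q,\xi}(\Lambda_{\mu,\epsilon}^{q,\xi})$, while the reverse inequality is automatic. The starting observation is that negating the defining condition of $\Lambda_{\mu,\epsilon}^{q,\xi}$ provides, for every $x \in A$ and every $\delta > 0$, a radius $r \le \delta$ with
\begin{equation*}
\HHH_\mu^{q,\xi}(B(x,r)) > (1+\epsilon)\,\mu(B(x,r))^q\,\xi(B(x,r)),
\end{equation*}
so the collection of these ``bad'' closed balls is a Vitali-type fine cover of $A$ at every scale.

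Next I would fix a closed subset $F \subseteq A$ and $\delta > 0$. Since $\HHH_\mu^{q,\xi}$ is a finite Borel regular measure by Theorem \ref{th_regular} and $\X$ supports the Besicovitch-type covering property already used elsewhere in the paper, the Vitali covering theorem for measures extracts from the fine cover a countable disjoint family of bad balls $\{B_i = B(x_i,r_i)\}$ with $x_i \in F$, $r_i \le \delta$, and $\HHH_\mu^{q,\xi}(F \setminus \bigcup_i B_i) = 0$. The null residual admits a centered $\delta$-cover of arbitrarily small total premeasure; appending it to $\{B_i\}$ yields a centered $\delta$-cover of $F$ satisfying
\begin{equation*}
\HHH_{\mu,\delta}^{q,\xi}(F) \le \sum_i \mu(B_i)^q \xi(B_i) < \frac{1}{1+\epsilon} \sum_i \HHH_\mu^{q,\xi}(B_i) = \frac{1}{1+\epsilon}\,\HHH_\mu^{q,\xi}\Bigl(\bigcup_i B_i\Bigr),
\end{equation*}
where the strict inequality is the bad condition applied termwise and the last equality uses countable additivity of $\HHH_\mu^{q,\xi}$ on the disjoint Borel family.

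Each $B_i$ lies in the closed $\delta$-neighborhood of $F$, and since $F$ is closed with $\HHH_\mu^{q,\xi}(\X)<\infty$, continuity from above of the finite measure gives $\HHH_\mu^{q,\xi}(\bigcup_i B_i) \to \HHH_\mu^{q,\xi}(F)$ as $\delta \to 0$, hence $\HHH_{\mu,0}^{q,\xi}(F) \le \tfrac{1}{1+\epsilon}\HHH_\mu^{q,\xi}(F)$. Running the same argument on each $G \subseteq F$ and observing $\overline{G} \subseteq F$ yields $\HHH_{\mu,0}^{q,\xi}(G) \le \tfrac{1}{1+\epsilon}\HHH_\mu^{q,\xi}(F)$; taking the supremum over $G \subseteq F$ in the definition $\HHH_\mu^{q,\xi}(F) = \sup_{G\subseteq F}\HHH_{\mu,0}^{q,\xi}(G)$ then forces $\HHH_\mu^{q,\xi}(F)(1-\tfrac{1}{1+\epsilon}) \le 0$, so $\HHH_\mu^{q,\xi}(F) = 0$ since $\epsilon > 0$. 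Finally, inner regularity of the finite Borel regular measure $\HHH_\mu^{q,\xi}$ on the separable metric space $\X$ yields $\HHH_\mu^{q,\xi}(A) = \sup\{\HHH_\mu^{q,\xi}(F) : F \subseteq A,\ F\text{ closed}\} = 0$, completing the equality.

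The hard part will be the Vitali-type covering step: one must verify that $\HHH_\mu^{q,\xi}$ really admits an almost-cover of $F$ by disjoint bad balls, a statement resting on both the Besicovitch property of $\X$ and the hypothesis $\HHH_\mu^{q,\xi}(\X) < \infty$. A second delicate point is ensuring that inner regularity applies to $A$; if necessary this is circumvented by replacing $A$ with a Borel envelope of the same $\HHH_\mu^{q,\xi}$-measure, produced by the Borel regularity of $\HHH_\mu^{q,\xi}$ in Theorem \ref{th_regular}.
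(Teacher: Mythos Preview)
Your outline has a genuine gap at the Vitali step. You invoke a ``Vitali covering theorem for measures'' to extract from the fine cover of bad balls a disjoint subfamily covering $F$ up to an $\HHH_\mu^{q,\xi}$-null set, and you justify this by saying that $\X$ ``supports the Besicovitch-type covering property already used elsewhere in the paper.'' But the paper's standing hypothesis is only that $\X$ is a separable metric space; the Besicovitch covering theorem appears solely in Example~\ref{W=H=0}, where it is an \emph{additional} assumption, and in the appendix as a reference. In a general separable metric space the differentiation-type Vitali theorem you need is simply not available for $\HHH_\mu^{q,\xi}$: it would require either Besicovitch or that $\HHH_\mu^{q,\xi}$ itself be doubling, and neither holds here. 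A telling symptom is that your argument never uses the hypotheses $\mu\in\mathcal P_D(\X)$ and $\xi\in\Phi_D$; these are precisely what the paper exploits to compensate for the lack of Besicovitch.

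The paper's route is to use the elementary $5r$-covering lemma (its Theorem~\ref{Vit_2}), which is valid in any metric space: from the fine cover of bad balls contained in a slightly larger open set $U\supseteq F$, one gets a disjoint family $\{B'\}$ with $F\subseteq\bigcup 5B'$, splits off finitely many balls, and controls the tail via $\HHH_\mu^{q,\xi}(\X)<\infty$. The doubling of $\mu$ and $\xi$ is then used to pass from $\mu(5B')^q\xi(5B')$ back to $\mu(B')^q\xi(B')$, yielding $\HHH_{\mu,\delta}^{q,\xi}(F)\le\frac{1+\epsilon/2}{1+\epsilon}\,\HHH_\mu^{q,\xi}(E)$ for \emph{every} $F\subseteq E$ (not just closed $F$), whence $\HHH_\mu^{q,\xi}(E)=0$ by taking the supremum. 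This also sidesteps your second difficulty: you restrict to closed $F\subseteq A$ and then appeal to inner regularity, but $A$ is not obviously Borel, and your proposed fix via a Borel envelope fails because points of the envelope outside $A$ need not carry bad balls.
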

\begin{proof}
Let $E$ be the set of all points $x\in \X$ that are not in $\Lambda_{\mu, \epsilon}^{q,\xi}$, which means , every $x\in\X$  such that there exists $0<r_j\le \frac{1}{j}$ and
$$
\HHH_\mu^{q,\xi}\big(B(x,r_j)\big) >(1+\epsilon) \mu\big(B(x,r_j)\big)^q \xi(B(x, r_j))\,,\ \forall\ j\in\N.
$$
Suppose that there exists $F\subseteq E$ such that  $\HHH_\mu^{q,\xi}(F)>0$. Then, there exists an open set $U$ that satisfies $F\subseteq U$ and $\HHH_\mu^{q, \xi}(U)< \HHH_\mu^{q,\xi}(F)(1+\epsilon/4)$. Now, for $\delta>0$,  let's consider
$$
{\mathcal C} =\Big\{ B(x,r_j)\subseteq U; \;\; j\ge 5/\delta,\; x\in F\Big\}.
$$
Then $\mathcal{C}$ is a fine cover of $F$ (see definition \ref{def_fine_cover}). Moreover, by Theorem  \ref{Vit_2}, there exists a family of pairwise  disjoint balls  ${\mathcal C} '\subseteq {\mathcal C} $ and
$$
F\subseteq \bigcup_{B'\in {\mathcal C}'} 5 B'.
$$
It follows, since $\HHH_\mu^{q,\xi}(\X)<\infty$, that
%only countable many of them may have positive measure and the sum of measures  is finite.
 there exists a finite collection of balls   $B_1\ldots B_N\in {\mathcal C}'$ verifying
$$
\sum_{B'\in {\mathcal C}'\backslash\{B_1,\ldots, B_N\}} \HHH_\mu^{q,\xi}(B') <  C^{-1}\HHH_\mu^{q,\xi}(F) \frac\epsilon4,
$$
where $C$ satisfies  $\mu(B(x, 5r))  \xi(B(x, 5r))\le C \mu(B(x,r)) \xi(B(x,r))$, \; for all $x\in F$. Now, using again  Theorem \ref{Vit_2}, we obtain
\begin{equation}\label{vit_N}
F\subseteq \bigcup_{j=1}^N  B(x,r_j) \;\; \cup\;  \bigcup_{B'\in {\mathcal C}'\backslash\{B_1,\ldots,  B_N\}} 5B'.
\end{equation}

Since for each $B'\in {\mathcal C}'$, $B'\subset U$, $\diam(5B')\le 5 \diam(B')\le 2\delta$, we have
\begin{eqnarray*}
\HHH_{\mu,\delta}^{q,\xi}(F)&\le&\sum_{j=1}^N \mu(B(x,r_j))^q \xi(B(x,r_j)) + \sum_{B'\in {\mathcal C}'\backslash\{B_1,\ldots,  B_N\}} \mu(5 B')^q \xi( 5 B')\\
&\le& \frac1{1+\epsilon} \Big(\sum_{j=1}^N \HHH_\mu^{q,\xi}(B(x,r_j))+ C \sum_{B'\in {\mathcal C}'\backslash\{B_1,\ldots,  B_N\}} \HHH_\mu^{q,\xi}(B')\Big)\\
&\le& \frac1{1+\epsilon} \Big(\HHH_\mu^{q,\xi}(U) + \HHH_\mu^{q,\xi}(E)\frac\epsilon4\Big)\\
&\le& \HHH_\mu^{q,\xi}(E)\frac{1+\epsilon/2}{1+\epsilon}.
\end{eqnarray*}
By letting $\delta\to 0$ we obtain
$$
\HHH_{\mu, 0}^{q,\xi}(F)\le  \HHH_\mu^{q,\xi}(E)\frac{1+\epsilon/2}{1+\epsilon}<  \HHH_\mu^{q,\xi}(E).
$$
which is a contradiction since $F$ is an arbitrary subset of $E$.

\end{proof}
\begin{remark}
The result in Lemma \ref{2.14} remains true if  $q\le 0$ even without the assumption  $\mu\in {\mathcal P}_D(\X)$.
\end{remark}
Now, we are able to give the proof of Theorem B. For this, we may
assume by  Lemma \ref{W=H=infty} that $\HHH_{\mu}^{q,
\xi}(E)<\infty$. Let $\wt E$ be a Borel set such that $E\subseteq
\wt E$ and $\HHH_{\mu}^{q, \xi}(E)=\HHH_{\mu}^{q, \xi}(\wt E)$. Let
$\epsilon
>0$. For each $j\in \N$, we set
$$
W_j := \Big\{x\in \wt E,\;   \HHH_\mu^{q, \xi}(\wt E\cap B) \le
(1+\epsilon) \mu(B)^q \xi(B), \ \forall\ B\subset B(x,\frac{1}{j})\Big\}.
$$
Then, $\{W_j\}_j$ is an increasing set.
Fix $j\in\N$. Let $(c_k, B(x_k, r_k))_{k\ge 1}$ be a weighted and centered $(1/j)-$covering of $E$ such that
\begin{equation}\label{W_j}
\sum_{k=1}^{\infty} c_k \,  \mu(B(x_k, r_k))^q \xi(B(x_k,r_k)) \le \W_{\mu, 1/j}^{q,\xi}(E) +\epsilon.
\end{equation}
%Let $I= \{k : W_j \cap B(x_k,r_k) \neq \emptyset\}$, then
Then
$$E\cap W_j \subseteq \bigcup_{k=1}^{\infty} B(x_k,r_k)\cap W_j \quad \text{and}\quad \chi_{E\cap W_j}\leq \displaystyle\sum_{W_j \cap B(x_k,r_k) \neq \emptyset}c_k \chi_{\wt E\cap B(x_k,r_k)}\,.$$
Let $\mathcal{S} =\Big\{x\ |\ \displaystyle\sum_{W_j \cap B(x_k,r_k) \neq \emptyset}c_k\chi_{\wt E\cap B(x_k,r_k)}(x)\geq 1\Big\}\,.$
Then, $\mathcal{S}$ is a Borel set that contains $E\cap W_j$, and $\chi_{\mathcal{S}}\leq\displaystyle\sum_{W_j \cap B(x_k,r_k) \neq \emptyset} c_k\chi_{\wt E\cap B(x_k,r_k)}\,.$
Integrating with respect to $\HHH_\mu^{q,\xi}$ yields

\[\HHH_\mu^{q,\xi}(E\cap W_j)\le \HHH_\mu^{q,\xi}(\mathcal{S})\le\displaystyle\sum_{W_j \cap B(x_k,r_k) \neq \emptyset}c_k\HHH_\mu^{q,\xi}(\wt E\cap B(x_k,r_k))\,.\]
For $k\ge 1$ such that $W_j \cap B(x_k,r_k) \neq \emptyset$, we have
\[\HHH_\mu^{q,\xi}(\wt E\cap B(x_k,r_k))\le (1+\epsilon)\mu(B(x_k,r_k))^q \xi(B(x_k,r_k))\,.\]
Therefore,
\begin{align*}
\HHH_\mu^{q,\xi}(E\cap W_j)&\le (1+\epsilon)\displaystyle\sum_{W_j \cap B(x_k,r_k) \neq \emptyset}c_k \mu(B(x_k,r_k))^q\xi(B(x_k,r_k))\\
&\le (1+\epsilon)(\W_{\mu, 1/j}^{q,\xi}(E)+\epsilon)\,.
\end{align*}
When $\epsilon$ tends to $0$, we obtain $\HHH_\mu^{q,\xi}(E\cap W_j)\le  \W_{\mu, 1/j}^{q,\xi}(E)\,.$

Using Lemma \ref{2.14}, we have
$$
 \HHH_\mu^{q,\xi}\Big(\wt E \setminus \bigcup_{j=1}^{\infty} W_j   \Big)=0.
$$
Moreover, $\HHH_\mu^{q, \xi}$ is a regular measure. Then,
$$
\HHH_\mu^{q,\xi}(E) \le  \HHH_\mu^{q,\xi}\Big( E \cap  \bigcup_{j=1}^{\infty} W_j   \Big)= \lim_{j\to \infty}\HHH_\mu^{q,\xi}(E \cap W_j).
$$
Thus, $\HHH_\mu^{q,\xi}(E)\ \le \displaystyle\lim_{j\to+\infty} \W_{\mu, 1/j}^{q,\xi}(E) = \W_{\mu,0}^{q,\xi}(E)\le \W_\mu^{q,\xi}(E)\,.$

%\end{Proof of Theorem B}

\begin{remark}
The result in Theorem B remains true if  $q\le 0$ even without the assumption  $\mu\in {\mathcal P}_D(\X)$.
\end{remark}

%\newpage
%%%%%%%%%%%%%%%%%%%%%%%%%%%%%%%%%%%%%%%%%%%%%%%%%%%%%%%%%%%%%%%%%%%%%%%%%%%%%%%%
\subsection{Proof of Theorem C}\label{proofC}

%%%%%%%%%%%%%%%%%%%%%%%%%%%%%%%%%%%%%%%%%%%%%%%%%%%%%%%%%%%%%%%%%%%%%%%%%%%%%%%%%%%%%%%%%%%%%%%%%%%%%%%

%\begin{proposition}\label{rela_W_H}
Let  $q\in \R$,  $\mu\in {\mathcal P}_D(\X)$,  $\nu\in {\mathcal
P}_D(\X')$, $\xi \in \Phi(\X)$ and $\xi' \in \Phi(\X')$. Assume that
$(\X, \xi, \mu)$ and $(\X', \xi',\nu)$ satisfy \eqref{hyp1}. Let
$E\subseteq \X$, $E'\subseteq \X'$ and $\delta>0$. It is clear that
we only need to prove
\begin{equation}\label{W<H<W}
 {\W}_{\mu, \delta}^{q, \xi}(E)  {\HHH}_{\nu, \delta}^{q, \xi'}(E')
 \le {\HHH}_{\mu\times \nu, \delta}^{q, \xi_0}(E\times E') \le  {\HHH}_{\mu, \delta}^{q, \xi}(E)  {\HHH}_{\nu, \delta}^{q,
 \xi'}(E').
\end{equation}
The right hand side of \eqref{W<H<W} may be inferred from the proof
of (\ref{sup_delta}) by  taking $c_i$ and $c'_i$ to be unity.
Therefore, we will only prove the left hand side.  Without loss of
generality, we can suppose that $ {\HHH}_{\mu\times \nu, \delta}^{q,
\xi_0}(E\times E')$ is finite and $ {\HHH}_{\nu, \delta}^{q,
\xi'}(E')$ is positive. Let $p$ any number such that $0<p<
{\HHH}_{\nu, \delta}^{q, \xi'}(E')$ and $(B_i\times B'_i)_{i\geq 1}$
be any centered $\delta$-cover of $E\times E'$. It follows that
$(B'_i)_{i\ge 1}$ is a centered $\delta$-cover for $E'$. For each
$i$, let $u_i=\nu(B'_i)^q\xi'(B'_i)/p$, then
\[p<\displaystyle\sum_{i=1}^{\infty}\nu(B'_i)^q\xi'(B'_i)\quad\text{and}\quad
\sum_{\stackrel{i=1}{ x \in B_i}}^\infty u_i>1\,.\]

Since this holds for each $x\in E$, we obtain a weighted and centered $\delta$-cover $(u_i,B_i)_{i\geq 1}$ of $E$ and we have :
\begin{align*}
\displaystyle\sum_{i=1}^\infty\mu\times\nu(B_i\times B'_i)^q\xi_0(B_i\times B'_i)&=\sum_{i=1}^\infty \mu(B_i)^q\xi(B_i)\nu(B'_i)^q\xi'(B'_i)\\
&=p\sum_{i=1}^\infty u_i\mu(B_i)^q\xi(B_i)\geq p{\W}_{\mu,\delta}^{q,\xi}(E)\,.
\end{align*}

Since $(B_i\times B'_i)_{i\geq 1}$ and $p$ are arbitrarily chosen, we obtain \[{\W}_{\mu, \delta}^{q, \xi}(E)  {\HHH}_{\nu, \delta}^{q, \xi'}(E')
 \le {\HHH}_{\mu\times \nu, \delta}^{q, \xi_0}(E\times E')\,.\]

%%%%%%%%%%%%%%%%%%%%%%%%%%%%%%%%%%%%%%%%%%%%%%%%%%%%%%%%%%%%%%%%%%%%%%%%%%%%%%%%%%%%%%%%%%%%%%%%%%%%%%%%%%%%%%%%%%%%%%%%

\section{   Applications  }  \label{application_h}

%%%%%%%%%%%%%%%%%%%%%%%%%%%%%%%%%%%%%%%%%%%%%%%%%%%%%%%%%%%%%%%%%%%%%%%%%%%%%%%%%%%%%%%%%%%%%%%%%%%%%%%%%%%%%%%%%%%%%
%%%%%%%%%%%%%%%%%%%%%%%%%%%%%%%%%%%%%%%%%%%%%%%%%%%%%%%%%%%%%%%%%%%%%%%%%%%%%%%%%%%%%%%%%%%%%%%%%%%%%%%%%%%%%%%%%%%%%

\subsection{ Application 1 : Definition of $\xi$ using Hausdorff function }\label{Haus}
%%%%%%%%%%%%%%%%%%%%%%%%%%%%%%%%%%%%%%%%%%%%%%%%%%%%%%%%%%%%%%%%%%%%%%%%%%%%%%%%%%%%%%%%%%%%%%%%%%%%%%%%%%%%%%%%%%%%%
For any Hausdorff function $h$, we set $\xi_h\in\Phi(\X)$ defined on $\mathcal{B}(\X)$ by $\xi_h(B) = h(\diam_{\rho}(B))$ where  $\diam_{\rho}(B)$ denotes the diameter of $B$ with respect to $\rho$.
Let $h, h'\in {\F}$. We define   the  function $\xi_0$, on the set ${ \mathcal B}(\X)\times { \mathcal B}(\X')$,  by \\
\[\xi_0(B\times B')=\xi_h(B)\xi_{h'}(B')=\left\{\begin{array}{lcl}
0&&\text{if}\ B\times B'=\emptyset\\
&&\\
h(\diam_{\rho}(B))h'(\diam_{\rho'}(B'))&& \text{if not}.
\end{array}\right.\]\\
We recall that the Hausdorff function $h\times h'$ is defined by \[h\times h'(r)=h(r)h'(r),\quad \quad \text{for all}\ r\in\R^+\,.\]

\begin{theorem}\label{thm_W}
For any $E\subseteq\X$ and any $F\subseteq\X'$, we have :
\[\W_{\mu\times\nu}^{q,{h\times h'}}(E\times F)\geq \W_\mu^{q,h}(E)\W_\nu^{q,{h'}}(F)\,.\]
Assume that either  $\W_\mu^{q,h}(E) =  \HHH_\mu^{q,h}(E)$ or  $\W_\nu^{q,h'}(F)=  \HHH_\nu^{q,h'}(F)$, then
\begin{equation}\label{classic_HH}
\HHH_{\mu\times\nu}^{q,h\times h'}(E\times F)\geq \HHH_\mu^{q,h}(E)\HHH_\nu^{q,h'}(F)\,.\end{equation}
\end{theorem}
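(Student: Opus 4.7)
The proof hinges on a simple geometric comparison between the two premeasures $\xi_0$ and $\xi_{h\times h'}$ on product balls, followed by direct invocation of Theorems A and C.

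The key observation is that in $\X\times\X'$ equipped with the max metric $\rho\times\rho'$, a closed ball centered at $(x,x')\in E\times F$ of radius $r$ is exactly $B_\X(x,r)\times B_{\X'}(x',r)$. Setting $d_1=\diam_\rho B_\X(x,r)$ and $d_2=\diam_{\rho'}B_{\X'}(x',r)$, one checks that $\diam_{\rho\times\rho'}\bigl(B_\X(x,r)\times B_{\X'}(x',r)\bigr)=\max(d_1,d_2)$. Since $h$ and $h'$ are monotonically increasing,
\[
\xi_0\bigl(B_\X(x,r)\times B_{\X'}(x',r)\bigr)=h(d_1)h'(d_2)\le h(\max(d_1,d_2))h'(\max(d_1,d_2))=\xi_{h\times h'}\bigl(B_\X(x,r)\times B_{\X'}(x',r)\bigr).
\]
Thus for every weighted and centered $\delta$-cover $(c_i,B_i\times B'_i)_i$ of $E\times F$, the sum $\sum_i c_i(\mu\times\nu)(B_i\times B'_i)^q\xi_0(B_i\times B'_i)$ is dominated by the corresponding sum using $\xi_{h\times h'}$. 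Passing to the infimum over all such covers, then letting $\delta\to 0$ and taking supremum over subsets, I obtain
\begin{equation*}
\W_{\mu\times\nu}^{q,\xi_0}(E\times F)\le \W_{\mu\times\nu}^{q,h\times h'}(E\times F)\quad\text{and}\quad \HHH_{\mu\times\nu}^{q,\xi_0}(E\times F)\le \HHH_{\mu\times\nu}^{q,h\times h'}(E\times F).
\end{equation*}

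For the first inequality in the theorem, I would combine the above with Theorem A (or, to avoid assumption \eqref{hyp1}, with Remark \ref{rem}(1), which provides the lower bound under the convention $0\times\infty=\infty\times 0=0$):
\[
\W_{\mu\times\nu}^{q,h\times h'}(E\times F)\ge \W_{\mu\times\nu}^{q,\xi_0}(E\times F)\ge \W_\mu^{q,h}(E)\W_\nu^{q,h'}(F).
\]

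For the second inequality, I would apply Theorem C together with its companion stated just below it, which yield both
\[
\W_\mu^{q,h}(E)\HHH_\nu^{q,h'}(F)\le \HHH_{\mu\times\nu}^{q,\xi_0}(E\times F)\quad\text{and}\quad \HHH_\mu^{q,h}(E)\W_\nu^{q,h'}(F)\le \HHH_{\mu\times\nu}^{q,\xi_0}(E\times F).
\]
Under the hypothesis that $\W_\mu^{q,h}(E)=\HHH_\mu^{q,h}(E)$ (respectively $\W_\nu^{q,h'}(F)=\HHH_\nu^{q,h'}(F)$), either choice promotes the left-hand side to $\HHH_\mu^{q,h}(E)\HHH_\nu^{q,h'}(F)$, and the comparison established in the first paragraph then gives $\HHH_{\mu\times\nu}^{q,\xi_0}(E\times F)\le \HHH_{\mu\times\nu}^{q,h\times h'}(E\times F)$, completing the chain.

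The only subtlety, which I expect to be the real (though mild) obstacle, is bookkeeping in the degenerate cases: when one of $\HHH_\mu^{q,h}(E),\HHH_\nu^{q,h'}(F)$ is zero the conclusion is immediate from the convention $0\times\infty=0$, while the case where both are finite and positive is handled by the argument above; the remaining case (one factor infinite) requires invoking the zero-infinite analysis already done for $\W$ and $\HHH$ in Lemma \ref{zero-infinite} to conclude that $\HHH_{\mu\times\nu}^{q,\xi_0}(E\times F)$ absorbs the product correctly. No new ideas are needed beyond the geometric comparison of $\xi_0$ with $\xi_{h\times h'}$ and the clean application of Theorems A and C.
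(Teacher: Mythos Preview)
Your proposal is correct and follows essentially the same route as the paper: establish the pointwise comparison $\xi_0\le \xi_{h\times h'}$ on product balls via monotonicity of $h,h'$ and the max-metric diameter, deduce $\W_{\mu\times\nu}^{q,\xi_0}\le \W_{\mu\times\nu}^{q,h\times h'}$ and $\HHH_{\mu\times\nu}^{q,\xi_0}\le \HHH_{\mu\times\nu}^{q,h\times h'}$, then invoke Theorem~A (via Remark~\ref{rem}(1)) for the $\W$-inequality and Theorem~C for the $\HHH$-inequality under the regularity hypothesis on one factor. Your explicit mention of Remark~\ref{rem}(1) to bypass assumption~\eqref{hyp1} for the first claim is in fact a slight sharpening of the paper's presentation, which cites \eqref{eq_product_W} directly.
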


\begin{proof}
First, we will prove a simple relation  between the generalized measures generated by $\xi_0$ and those generated by $\xi_{h\times h'}$. Let $\delta>0$,   $E_1\subseteq E$ and  $F_1\subseteq F$. We will prove that
\begin{equation}\label{W-h-phi}
\W_{\mu\times\nu, \delta }^{q, {h\times h'}}(E_1\times F_1)\geq \W_{\mu\times\nu, \delta }^{q, \xi_0}(E_1\times F_1)
\end{equation}
Clearly, we may  assume that $\W_{\mu\times\nu,\delta}^{q,\xi_{h\times h'}}(E_1 \times F_1)$ is finite. We consider a weighted and centered $\delta$-cover $(c_i,B_i\times B'_i)_{i=1}^\infty$ for $E_1\times F_1$ and, for each $i$, we set
 \[d^{\times}(B_i\times B'_i):=\max (\diam_{\rho}(B_i),\diam_{\rho'}(B'_i))\,.\]
Then, \begin{align*}
\xi_{h\times h'}(B_i\times B'_i) = h\times h'(d^{\times}(B_i\times B'_i))&=h(d^{\times}(B_i\times B'_i))\, h'(d^{\times}(B_i\times B'_i))\\
&\geq h(\diam_{\rho}(B_i))\, h'(\diam_{\rho'}(B'_i))\\
&=\xi_0(B_i\times B'_i).
\end{align*}

After summation, we get

\begin{eqnarray*}\label{ineq_sum}
\displaystyle\sum_{i=1}^\infty c_i\mu\times\nu(B_i\times B'_i)^q h\times h'(d^{\times}(B_i\times B'_i))&\geq&\sum_{i=1}^\infty c_i\mu\times\nu(B_i\times B'_i)^q\xi_0(B_i\times B'_i)\\
&\geq& \W_{\mu\times\nu,\delta}^{q,\xi_0}(E_1\times F_1).
\end{eqnarray*}
Which gives \eqref{W-h-phi} and implies that
$\W_{\mu\times\nu,0}^{q,{h\times h'}}(E_1\times F_1)\geq
\W_{\mu\times\nu,0}^{q,\xi_0}(E_1\times F_1)$ for any subsets
$E_1\subseteq E$ and $ F_1\subseteq F. $ Therefore,
$$\W_{\mu\times\nu}^{q,{h\times h'}}(E\times F)\geq \W_{\mu\times\nu}^{q,\xi_0}(E\times F).$$
Now, using  \eqref{eq_product_W}, we obtain
\begin{eqnarray*}
\W_{\mu\times\nu}^{q,{h\times h'}}(E\times F)&\geq&
\W_{\mu}^{q,h}(E)\W_{\nu}^{q,{h'}}(F)
\end{eqnarray*}

Similarly, by taking  $c_i$ to be equal to $1$,  we obtain
$ \HHH_{\mu\times \nu}^{q,{h\times h'}}(E\times F)\geq \HHH_{\mu\times\nu}^{q,\xi_0}(E\times F)\,.$
Moreover, using Theorem C and  \eqref{eq_thB}, we get
\begin{equation}\label{H-W}
\HHH_{\mu\times\nu}^{q, {h\times h'}}(E\times F)\geq \begin{cases}
 \W_\mu^{q,h}(E)\HHH_\nu^{q, {h'}}(F)&\\
 & \\
 \HHH_\mu^{q,h}(E)\W_\nu^{q, {h'}}(F).&\\
 \end{cases}
 \end{equation}
So, if we suppose that $\W_\mu^{q,h}(E)=\HHH_\mu^{q,h}(E)$ or  $\W_\mu^{q,h'}(F)=  \HHH_\mu^{q,h'}(F)$  we obtain
\[\HHH_{\mu\times\nu}^{q, {h\times h'}}(E\times F)\geq \HHH_\mu^{q, h}(E)\HHH_\nu^{q, {h'}}(F)\,.\]
\end{proof}

\begin{remark}
As a consequence of theorem B, if $h$ and $\mu$ (or $h'$ and $\nu$) are blanketed, then \eqref{classic_HH} holds.
\end{remark}

%%%%%%%%%%%%%%%%%%%%%%%%%%%%%%%%%%%%%%%%%%%%%%%%%%%%%%%%%%%%%%%%%%%%%%%%%%%%%%%%%%%%
\subsection{Application 2}\label{sec_density}
%%%%%%%%%%%%%%%%%%%%%%%%%%%%%%%%%%%%%%%%%%%%%%%%%%%%%%%%%%%%%%%%%%%%%%%%%%%%%%%%%%%%

%%%%%%%%%%%%%%%%%%%%%%%%%%%%%%%%%%%%%%%%%%%%%%%%%%%%%%%%%%%%%%%%%%%%%%%%%%%%%%%%%%%%%%%%%%%%%%%%%%%%%%%%%%%%%%%%%%%%%

%%%%%%%%%%%%%%%%%%%%%%%%%%%%%%%%%%%%%%%%%%%%%%%%%%%%%%%%%%%%%%%%%%%%%%%%%%%%%%%%%%%%
%It is well know that density theorems play a major role in geometric  measure theory. \\

 Here we will consider an interesting case when  $\xi_0$ is defined on
$\mathcal{B}(\X\times\X')$ by $\xi_0(B\times B') =\xi(B)\xi'(B')$.
Our goal is to   prove Theorem \ref{product_partial} using density
approach. Let $\nu,~\mu \in \mathcal{P}(\X)$,   $x \in supp(\mu)$,
$q \in \R$ and $h \in {\mathcal F}_D$, we define the upper
$(q,\xi)$-density at $x$ with respect to $\mu$ by
$$\displaystyle\overline{d}_{\mu}^{~q,\xi}(x,~\nu) = \underset{r \searrow 0}{\limsup}~\displaystyle\frac{\nu  \left( B(x,~r) \right)}{\mu\left( B(x,~r) \right)^{q} \xi\left( B(x,r) \right)}. $$

%In the following, we will give our density theorem.
\begin{lemma}\label{density}
Let $\mu, \nu\in {\mathcal P}(\X), q\in \R, \xi \in {\Phi}_D(\X)$ and $E\subseteq \supp \mu$.
\begin{itemize}
\item[(i)] If $\mu \in \mathcal{P}_D(\X)$ and $\HHH_{\mu}^{~q,\xi}(E) < \infty$ then
\begin{equation}\label{dens1}
\mathcal{H}_{\mu}^{~q,\xi}(E)~ \underset{x \in E}{\inf}~\displaystyle\overline{d}_{\mu}^{~q,\xi}(x,~\nu) \leq \nu(E)
\end{equation}
\item[(ii)] If $\mathcal{H}_{\mu}^{~q, \xi}(E) < \infty$ then
\begin{equation}\label{dens2}
\nu(E) \leq \mathcal{H}_{\mu}^{~q,\xi}(E)~ \underset{x \in E}{\sup}~\displaystyle\overline{d}_{\mu}^{~q, \xi}(x,~\nu).
\end{equation}
\end{itemize}
\end{lemma}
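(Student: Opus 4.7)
The plan is to establish (i) and (ii) as density comparison lemmas: part (ii) follows from a direct centered $\delta$-cover estimate, while part (i) --- the ``dual'' statement --- requires a Vitali-type extraction that is exactly made possible by the hypothesis $\mu\in\mathcal{P}_D(\X)$.

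\smallskip

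\emph{Plan for (ii).} Fix any $s>\sup_{x\in E}\overline{d}_{\mu}^{q,\xi}(x,\nu)$. Since the $\limsup$ defining $\overline{d}_{\mu}^{q,\xi}(x,\nu)$ is strictly below $s$ at every $x\in E$, there exists $N(x)\in\N$ with $\nu(B(x,r))<s\,\mu(B(x,r))^q\xi(B(x,r))$ for all $r\le 1/N(x)$. Set $E_N:=\{x\in E:N(x)\le N\}$; these sets increase to $E$. For any $N$ and any $\delta\le 1/N$, and any centered $\delta$-cover $\{B(x_i,r_i)\}_i$ of $E_N$ with $x_i\in E_N$ (so $r_i\le\delta\le 1/N(x_i)$), monotonicity of $\nu$ and the density inequality give
\[
\nu(E_N)\le\sum_i\nu(B(x_i,r_i))\le s\sum_i\mu(B(x_i,r_i))^q\xi(B(x_i,r_i)).
\]
Taking the infimum over covers, then the supremum over $\delta>0$, then using $E_N\subseteq E$, yields $\nu(E_N)\le s\,\HHH_{\mu}^{q,\xi}(E)$. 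Letting $N\to\infty$ and $s\searrow\sup_{x\in E}\overline{d}_{\mu}^{q,\xi}(x,\nu)$ completes (ii).

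\smallskip

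\emph{Plan for (i).} Fix $t$ with $0<t<\inf_{x\in E}\overline{d}_{\mu}^{q,\xi}(x,\nu)$. For every $x\in E$ there are arbitrarily small radii $r$ with $\nu(B(x,r))>t\,\mu(B(x,r))^q\xi(B(x,r))$, equivalently $\mu(B(x,r))^q\xi(B(x,r))<\nu(B(x,r))/t$. Fix $\delta,\varepsilon>0$ and, invoking outer regularity of the Borel probability measure $\nu$, choose an open $U\supseteq E$ with $\nu(U)\le\nu(E)+\varepsilon$. The collection of those density-inequality balls that are contained in $U$ and have radius at most $\delta$ forms a fine (Vitali) cover of $E$. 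Because $\mu$ is blanketed and $\HHH_{\mu}^{q,\xi}(E)<\infty$, one can apply the Vitali covering theorem in the same manner as the proof of Lemma \ref{2.14} (extracting a disjoint subfamily and controlling the tail via a finite-vs-infinite splitting) to obtain a pairwise-disjoint countable subfamily $\{B(x_i,r_i)\}_{i\ge 1}$ with $x_i\in E$, $r_i\le\delta$, $B(x_i,r_i)\subseteq U$, and
\[
\HHH_{\mu,\delta}^{q,\xi}(E)\le\sum_i\mu(B(x_i,r_i))^q\xi(B(x_i,r_i)).
\]
Combining with the density inequality and the disjointness of the $B(x_i,r_i)$ inside $U$,
\[
\HHH_{\mu,\delta}^{q,\xi}(E)\le\frac{1}{t}\sum_i\nu(B(x_i,r_i))\le\frac{\nu(U)}{t}\le\frac{\nu(E)+\varepsilon}{t}.
\]
Letting $\delta\to 0$ and $\varepsilon\to 0$, and applying the identical argument to every $F\subseteq E$ (the density hypothesis only strengthens upon restriction, since $\inf_{x\in F}\overline{d}_{\mu}^{q,\xi}(x,\nu)\ge\inf_{x\in E}\overline{d}_{\mu}^{q,\xi}(x,\nu)>t$), yields $\HHH_{\mu}^{q,\xi}(E)\le\nu(E)/t$. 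Letting $t\nearrow\inf_{x\in E}\overline{d}_{\mu}^{q,\xi}(x,\nu)$ proves (i).

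\smallskip

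The principal obstacle is the Vitali step in (i): one must extract a disjoint family whose covering cost $\sum_i\mu(B_i)^q\xi(B_i)$ dominates $\HHH_{\mu,\delta}^{q,\xi}(E)$ with \emph{no} multiplicative constant, mirroring the finite-head plus small-tail splitting deployed in Lemma \ref{2.14}. This is exactly the role of the blanketed hypothesis on $\mu$ and of the finiteness assumption $\HHH_{\mu}^{q,\xi}(E)<\infty$. By contrast, part (ii) requires nothing beyond monotonicity of $\nu$ against centered $\delta$-covers and is essentially tautological once one stratifies $E$ by the first radius below which the upper density is controlled by $s$.
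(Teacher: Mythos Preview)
Your proposal is correct and follows precisely the approach the paper indicates: the authors' proof is a one-line reference (``mimics Theorem 2.14 in \cite{Ol1}, using Theorem \ref{Vit_2} instead of \cite[Lemma 1.9]{Falconer}''), and you have supplied exactly those details --- a direct centered-cover estimate for (ii) and a Vitali-type extraction with a head/tail splitting for (i), paralleling the paper's own Lemma \ref{2.14}. One small remark: in part (i) the tail control that makes the splitting work comes from $\sum_i\mu(B_i)^q\xi(B_i)\le \nu(U)/t<\infty$ (finiteness of $\nu$), not directly from the hypothesis $\HHH_\mu^{q,\xi}(E)<\infty$; the latter is only needed so that the product on the left of \eqref{dens1} is unambiguous when the infimum vanishes.
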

\begin{proof}
The proof of this lemma  is straightforward and mimics that in Theorem 2.14 in \cite{Ol1},   when we use Theorem \ref{Vit_2} instead of  \cite[Lemma 1.9]{Falconer}.
\end{proof}

An  interesting consequence of Lemma \ref{density} is the following
Corollary which will be used to prove  Theorem
\ref{product_partial}.
 \begin{corollary} \label{Hmu} Let $\xi\in {\Phi}_D(\X)$,  $q\in \R,$ $\mu\in {\mathcal P}_D(\X)$  and $E\subseteq \supp \mu $ be a Borel set  such
that ${\HHH}_\mu^{q, \xi}(E) <+\infty$.
\begin{enumerate}
\item  If   $\overline{d}_{\mu}^{q, \xi}(x,  {\W_\mu^{q,\xi}}_{\llcorner E}) =+\infty$ for ${{ \HHH}_\mu^{q, \xi}}$-a.a. on $E$, then  $${\W}_\mu^{q, \xi}(E) = {\HHH}_\mu^{q, \xi}(E)=0.$$
\item If  ${{\W}_\mu^{q, \xi}}(E)=0$ then
$$  \overline{d}_\mu^{q, \xi}(x, {\W_\mu^{q,\xi}}_{\llcorner E})=0 \;\; \text{for}\;\; {{ \HHH}_\mu^{q, \xi}}-a.a.\;\;  \text{on}\;\; E.$$
\item $ \overline{d}_{\mu}^{q,\xi}(x, {{\HHH}_\mu^{q, \xi}}_{\llcorner E}) =1, $ $ \HHH^{q,\xi}_\mu\text{-a.a. on } \; E.$
\item If  there exists $\nu\in  \mathcal {P}(\X)$ such that
$\sup_{x\in E} \overline{d}^{q, \xi}_\mu(x, \nu)\le  \gamma < \infty$ then
 $$\HHH^{q, \xi}_\mu (E) \ge \nu(E)/\gamma.$$
\end{enumerate}

\end{corollary}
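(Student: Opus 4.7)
\noindent\textbf{Proof plan for Corollary \ref{Hmu}.} The plan is to derive all four statements from Lemma \ref{density}, applied to carefully chosen auxiliary measures $\nu$, combined with the elementary comparison $\W_\mu^{q,\xi}\le\HHH_\mu^{q,\xi}$ from Remark \ref{W<H}. Part (4) is the most direct: feed the given $\nu$ into Lemma \ref{density}(ii); since $\sup_{x\in E}\overline{d}_\mu^{q,\xi}(x,\nu)\le\gamma$, inequality \eqref{dens2} yields $\nu(E)\le\gamma\,\HHH_\mu^{q,\xi}(E)$, which rearranges to the stated bound. Part (2) is nearly tautological: $\W_\mu^{q,\xi}(E)=0$ forces the restricted measure ${\W_\mu^{q,\xi}}_{\llcorner E}$ to assign mass zero to every set, so the numerator in the ratio defining $\overline{d}_\mu^{q,\xi}(x,{\W_\mu^{q,\xi}}_{\llcorner E})$ vanishes identically; since $\mu\in\mathcal P_D(\X)$ keeps the denominator positive for $x\in\supp\mu$ and $r$ small, the limsup equals $0$ at every such point, hence $\HHH_\mu^{q,\xi}$-a.e.\ on $E$.

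For Part (1), set
\[E_0=\bigl\{x\in E:\overline{d}_\mu^{q,\xi}(x,{\W_\mu^{q,\xi}}_{\llcorner E})=+\infty\bigr\},\]
so that $\HHH_\mu^{q,\xi}(E\setminus E_0)=0$ by hypothesis and $\HHH_\mu^{q,\xi}(E_0)=\HHH_\mu^{q,\xi}(E)$. Applying Lemma \ref{density}(i) on $E_0$ with $\nu={\W_\mu^{q,\xi}}_{\llcorner E}$, the left hand side of \eqref{dens1} reads $\HHH_\mu^{q,\xi}(E_0)\cdot(+\infty)$, while the right hand side is bounded above by $\W_\mu^{q,\xi}(E)\le\HHH_\mu^{q,\xi}(E)<\infty$. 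With the convention $0\cdot\infty=0$, this is only consistent if $\HHH_\mu^{q,\xi}(E_0)=0$, hence $\HHH_\mu^{q,\xi}(E)=0$, and then $\W_\mu^{q,\xi}(E)\le\HHH_\mu^{q,\xi}(E)=0$ by Remark \ref{W<H}.

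For Part (3), apply both halves of Lemma \ref{density} with $\nu={\HHH_\mu^{q,\xi}}_{\llcorner E}$ to an arbitrary subset $F\subseteq E$ of positive finite $\HHH_\mu^{q,\xi}$-measure. Since $\nu(F)=\HHH_\mu^{q,\xi}(F)$, both inequalities collapse to
\[\inf_{x\in F}\overline{d}_\mu^{q,\xi}(x,\nu)\;\le\;1\;\le\;\sup_{x\in F}\overline{d}_\mu^{q,\xi}(x,\nu).\]
A standard exhaustion argument then takes $F=A_\epsilon:=\{x\in E:\overline{d}_\mu^{q,\xi}(x,\nu)>1+\epsilon\}$ and $F=B_\epsilon:=\{x\in E:\overline{d}_\mu^{q,\xi}(x,\nu)<1-\epsilon\}$ in turn, yielding $\HHH_\mu^{q,\xi}(A_\epsilon)=\HHH_\mu^{q,\xi}(B_\epsilon)=0$ for every $\epsilon>0$; letting $\epsilon\to 0$ through a countable sequence gives $\overline{d}_\mu^{q,\xi}(x,\nu)=1$ for $\HHH_\mu^{q,\xi}$-a.e.\ $x\in E$.

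The main obstacle is the bookkeeping around the sub- and super-level sets of $x\mapsto\overline{d}_\mu^{q,\xi}(x,\nu)$, which must be verified to be admissible inputs to Lemma \ref{density}, together with a careful handling of the $0\cdot\infty$ convention in Part (1) so that the contradiction is genuine precisely when $\HHH_\mu^{q,\xi}(E)>0$. Once these points are addressed, everything else reduces to routine invocations of the density lemma and the comparison $\W_\mu^{q,\xi}\le\HHH_\mu^{q,\xi}$.
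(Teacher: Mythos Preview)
Your proposal is correct and follows essentially the same route as the paper's proof: Parts (1), (3), and (4) match the paper's level-set arguments based on Lemma \ref{density} almost verbatim. The only deviation is in Part (2), where you argue directly from ${\W_\mu^{q,\xi}}_{\llcorner E}\equiv 0$, whereas the paper instead applies \eqref{dens1} to the sets $E_n=\{x\in E:\overline d_\mu^{q,\xi}(x,\theta)\ge 1/n\}$; the paper's version sidesteps any need to discuss positivity of the denominator $\mu(B(x,r))^q\xi(B(x,r))$, but otherwise the two arguments are interchangeable.
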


\begin{proof}
\begin{enumerate}
\item We take $\theta ={{\W}_\mu^{q, \xi}}_{\llcorner E}$ and we set  $F=\Big\{x\in E;\;\;\overline{d}_\mu^{q,\xi}(x, \theta)=+\infty\Big\}$ and we obtain from \eqref{dens1} that
$$
{\mathcal H}_\mu^{q,\xi}(F) \displaystyle\inf_{x\in F}
\overline{d}_\mu^{q,\xi}(x,\theta)\leq \theta (F)<+\infty.
$$
This implies that  ${\mathcal H}_\mu^{q,\xi}(F)=0$ and ${\mathcal
H}_\mu^{q,\xi}(E)={\mathcal H}_\mu^{q,\xi}(F)+{\mathcal H}_\mu^{q,\xi}(E\setminus F)=0$.
\item  We take $\theta ={{\W}_\mu^{q, \xi}}_{\llcorner E}$ and we consider, for $n\in \mathbb{N}$,  the set
 $$
E_n = \Big\{x\in E;\;\;\overline{d}_\mu^{q,\xi}(x, \theta)\ge 1/n\Big\}.
 $$
Then \eqref{dens1} gives
$$
 {{\mathcal H}_\mu^{q, \xi}}(E_n) \le n \,  \theta ( E_n)  =0,\;\;
\forall n\in\mathbb{N}.
$$
We therefore conclude that $\overline{d}_\mu^{q, \xi}(x,\theta)= 0 $ for $ {{\mathcal H}_\mu^{q, \xi}}$-a.a. on $E$.\\
\item Take $\theta ={{\HHH}_\mu^{q, \xi}}_{\llcorner E}$. Put the set $ F=\Big\{ x\in E;\;\; \overline{d}_\mu^{q, \xi}(x, \theta)>1\Big\},$ and for $m\in\mathbb{N}^*$
 $$
F_m=\left\{ x\in E;\;\; \overline{d}_\mu^{q, \xi}(x, \theta)>1+ \frac1m\right\}.
 $$
We therefore deduce from \eqref{dens1}  that
 $$
\left(1+ \frac1m\right){\HHH}_\mu^{q, \xi}(F_m)\leq {\HHH}_\mu^{q, \xi}(F_m).
 $$
This implies that ${\HHH}_\mu^{q, \xi}(F_m)=0$. Since  $F=\bigcup_m
F_m$, we obtain ${\HHH}_\mu^{q, \xi}(F)=0$, i.e.
\begin{eqnarray}\label{33bBBB}
\overline{d}_\mu^{q, \xi}(x, \theta)\leq1 \quad \text{for} \quad {\HHH}_\mu^{q, \xi}\text{-a.a.}\; x\in E.
\end{eqnarray}
Now consider the set $ \tilde F=\Big\{ x\in E;\;\;
\overline{d}_\mu^{q, \xi}(x, \theta)<1\Big\},$ and for $m\in\mathbb{N}^*$
 $$
\tilde F_m=\left\{ x\in E;\;\; \overline{d}_\mu^{q, \xi}(x, \theta)<1-
\frac1m\right\}.
 $$
Using \eqref{dens2}, we clearly have
 $$
{\HHH}_\mu^{q, \xi}(\tilde F_m)
\le \left(1-\frac1m\right){\HHH}_\mu^{q, \xi}(\tilde F_m).
 $$
This implies that ${\HHH}_\mu^{q, \xi}(\tilde F_m)=0$. Since
$F=\bigcup_m \tilde F_m$, we obtain ${\HHH}_\mu^{q, \xi}(F)=0$,
i.e.
\begin{eqnarray}\label{33bNN}
\overline{d}_\mu^{q, \xi}(x, \theta) \geq1 \quad \text{for} \quad {\HHH}_\mu^{q, \xi} \text{-a.a.}\; x\in E.
\end{eqnarray}
The result  follows from \eqref{33bBBB}  and \eqref{33bNN}.
\item A direct consequence of \eqref{dens2}.
\end{enumerate}

\end{proof}

\begin{theorem}\label{product_partial}
Let $\mu \in {\mathcal P}_D(\X)$,   $\nu \in {\mathcal P}_D(\X'),$
$\xi \in\Phi_D(\X)$, $\xi'\in \Phi(\X')$, $q\in \R,$ $E$ and $E'$ be two Borel  sets of
$\supp\mu$ and $\supp \nu$ respectively.  Assume that
$\HHH^{q,\xi}_{\mu}(E) <+\infty $ and $\HHH^{q, \xi'}_{\nu}(E')
<+\infty.$ then
$$
\HHH^{q,\xi}_{\mu}(E) \HHH^{q,\xi'}_{\nu}(E') \le   \HHH^{q, \xi_0}_{\mu \times \nu}(E \times E').
$$
\end{theorem}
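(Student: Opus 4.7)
The plan is to use the density machinery of Corollary \ref{Hmu}, following a strategy analogous to the density approach of \cite{Attia20}. The key observation is that the product measure $\theta := (\HHH^{q,\xi}_\mu \llcorner E) \otimes (\HHH^{q,\xi'}_\nu \llcorner E')$ is a finite Borel measure on $\X \times \X'$ whose total mass is exactly $\HHH^{q,\xi}_\mu(E)\,\HHH^{q,\xi'}_\nu(E')$, and that evaluated on product balls it factorizes: $\theta(B(x,r)\times B(x',r)) = \HHH^{q,\xi}_\mu(E \cap B(x,r))\,\HHH^{q,\xi'}_\nu(E' \cap B(x',r))$. Using the sup metric on $\X \times \X'$, one checks that $\mu\times\nu \in \mathcal{P}_D(\X\times\X')$ and $\xi_0\in\Phi_D$, so that Corollary \ref{Hmu} applies in the product space. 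The goal is then to bound the upper $(q,\xi_0)$-density of $\theta$ with respect to $\mu\times\nu$ on $E\times E'$ and invoke item (4) of Corollary \ref{Hmu}.

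The next step is a stratification argument to turn the a.e.\ bound coming from Corollary \ref{Hmu}(3) into a uniform bound on subsets. By Corollary \ref{Hmu}(3) applied to $\mu$ on $E$ and $\nu$ on $E'$,
\[
\limsup_{r\searrow 0}\frac{\HHH^{q,\xi}_\mu(E\cap B(x,r))}{\mu(B(x,r))^q\xi(B(x,r))}=1
\]
for $\HHH^{q,\xi}_\mu$-a.a.\ $x\in E$ (and symmetrically for $x'\in E'$). Fix $\epsilon>0$ and set
\[
E_k := \bigl\{x\in E:\HHH^{q,\xi}_\mu(E\cap B(x,r))\le (1+\epsilon)\mu(B(x,r))^q\xi(B(x,r)),\ \forall r\le 1/k\bigr\},
\]
and define $E'_k$ analogously. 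The family $(E_k)$ is increasing, and every $x\in E$ whose upper density equals $1$ belongs to some $E_k$; hence $\HHH^{q,\xi}_\mu\bigl(E\setminus\bigcup_k E_k\bigr)=0$, so $\HHH^{q,\xi}_\mu(E_k)\nearrow \HHH^{q,\xi}_\mu(E)$ by continuity from below, and likewise for $E'_k$. Consequently $\theta(E_k\times E'_k)\nearrow \HHH^{q,\xi}_\mu(E)\,\HHH^{q,\xi'}_\nu(E')$.

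For $(x,x')\in E_k\times E'_k$ and $r\le 1/k$, the factorization of $\theta$ on product balls together with the defining inequalities for $E_k$ and $E'_k$ yields
\[
\frac{\theta\bigl(B(x,r)\times B(x',r)\bigr)}{(\mu\times\nu)\bigl(B(x,r)\times B(x',r)\bigr)^q\,\xi_0\bigl(B(x,r)\times B(x',r)\bigr)} \le (1+\epsilon)^2,
\]
so $\overline{d}^{\,q,\xi_0}_{\mu\times\nu}\bigl((x,x'),\theta\bigr)\le (1+\epsilon)^2$ on $E_k\times E'_k$. Applying Corollary \ref{Hmu}(4) (which extends to finite Borel measures by normalization) to the measure $\theta$ on the set $E_k\times E'_k$ produces
\[
\HHH^{q,\xi_0}_{\mu\times\nu}(E\times E')\ge \HHH^{q,\xi_0}_{\mu\times\nu}(E_k\times E'_k)\ge \frac{\theta(E_k\times E'_k)}{(1+\epsilon)^2}.
\]
Letting $k\to\infty$ and then $\epsilon\to 0$ finishes the proof. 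The subtlety I expect to be the main obstacle is the measurability and approximation of the stratifying sets $E_k$ (whose description involves the set-function $r\mapsto \HHH^{q,\xi}_\mu(E\cap B(x,r))$, which need not be Borel in $x$); this is handled by replacing $E_k$ with an $\HHH^{q,\xi}_\mu$-measurable enlargement having the same measure, which is legitimate because Theorem \ref{th_regular} guarantees Borel regularity of $\HHH^{q,\xi}_\mu$.
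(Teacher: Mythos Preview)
Your proof is correct and follows the same density strategy as the paper, using Corollary~\ref{Hmu} together with the product measure $\theta=(\HHH^{q,\xi}_\mu\llcorner E)\otimes(\HHH^{q,\xi'}_\nu\llcorner E')$. The difference is purely in implementation: you stratify into sets $E_k$, $E'_k$ on which the density ratio is uniformly bounded by $1+\epsilon$ for all small radii, then apply Corollary~\ref{Hmu}(4) and pass to the limit. The paper proceeds more directly: it restricts at once to the full-measure sets $\widetilde E=\{x\in E:\overline d^{\,q,\xi}_\mu(x,\theta_1)=1\}$ and $\widetilde E'$, and on $\widetilde E\times\widetilde E'$ uses the elementary inequality $\limsup_{r\to 0}(a_r b_r)\le\limsup_{r\to 0} a_r\cdot\limsup_{r\to 0} b_r$ (valid for nonnegative sequences) to obtain $\overline d^{\,q,\xi_0}_{\mu\times\nu}((x,x'),\theta)\le 1$ uniformly, then applies Corollary~\ref{Hmu}(4) once with $\gamma=1$. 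This shortcut eliminates the stratification, the double limit, and the measurability concern you flag about the sets $E_k$; since Corollary~\ref{Hmu}(4) only needs a pointwise sup bound on the density over the set, no Borel structure of $\widetilde E\times\widetilde E'$ is required beyond what is needed to evaluate $\theta$.
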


\begin{proof}
 Let $\theta_1$ be  the restriction of $\HHH^{q,\xi}_{\mu}$ to $E$ and $ \theta_2$  be the restriction
of $\HHH^{q, \xi'}_{\nu}$ to $E'.$ We set
$$
\widetilde E =  \left\{  x\in E,\;\; \;\; {\overline
d}^{q,\xi}_{\mu}(x, \theta_1) =  1 \right\}
$$
and
$$
\widetilde E' =  \left\{  x\in E',\;\; \;\; {\overline
d}^{q,\xi'}_{\nu} (x, \theta_2) =  1  \right\}.
$$
Then, using corollary   \ref{Hmu}, we have $\theta_1(E) =
\theta_1(\widetilde E) $ and $\theta_2(E') = \theta_2(\widetilde E')$. Therefore,  for
$(x, y) \in \widetilde E \times \widetilde E$, we have
\begin{eqnarray*}
{\overline d}_{\mu\times \nu}^{q, \xi_0} \Big( (x, y), \theta_1
\times \theta_2\Big) &=& \limsup_{r\to 0} \left[\frac{\theta_1\big(B(x,r)
\big)}{\mu\big(B(x,r)  \big)^q \xi(B(x,r))}\;
\frac{\theta_2\big(B(y,r)\big)}{\nu\big(B(y,r)
  \big)^q \xi(B(x, r))} \right] \\
&\le & {\overline d}_{\mu}^{q, \xi} \big(x, \theta_1 \big) \;\;
{\overline d}_{\nu}^{q, \xi'} \big(y, \theta_2 \big) =  1.
\end{eqnarray*}
Therefore, by corollary  \ref{Hmu},
$$
 \HHH^{q, \xi_0}_{\mu\times \nu} \big( \widetilde E \times
\widetilde E'\big) \ge   \theta_1\times \theta_2  \big( \widetilde E \times
\widetilde E'\big) =  \theta_1 \big( \widetilde E\big) \theta_2\big(
\widetilde E'\big)  =  \theta_1(E) \theta_2(E').
$$
Hence,
$$
\HHH^{q, \xi_0}_{\mu\times \nu} (E\times F) \ge
\HHH^{t+s}_{\mu\times \nu} \big( \widetilde E \times \widetilde
F\big) \ge  \theta_1(E) \theta_2(E') =  \HHH^{q, \xi}_{\mu}(E) \HH^{q,
\xi'}_{\nu}(E').
$$
\end{proof}

%\newpage

%\newpage

\section{appendix}
\begin{definition}\label{def_fine_cover}\cite{Edgar98}\\
A fine cover of a set $E$ is a family $\mathcal{C}$ of closed balls $B(x,r)$ with $x\in E$ and $r>0$ such that, for every $x\in E$ and every $\epsilon>0$, there is $r>0$ such that $r<\epsilon$ and $B(x,r)\in\mathcal{C}$.
\end{definition}

\begin{theorem}[Besicovitch covering Theorem]\label{BCT}\cite{Mat}.\\
There exists an integer $\gamma \in\N$ such that, for any subset $A$ of
$\R^n$ and any sequence $(r_x)_{x\in A}$ satisfying
\begin{enumerate}
\item $r_x>0$, \quad $\forall\; x\in A$,
\item $\displaystyle\sup_{x\in A} r_x < \infty$.
\end{enumerate}

Then, there exists $\gamma$ countable or  finite families $B_1, \ldots,
B_\gamma$ of $\big\{ B_x(r_x), \;\; x\in A \big\}$, such that
\begin{enumerate}
\item $A\subset \bigcup_i \bigcup_{B\in B_i} B$.
\item $B_i$ is a family of disjoint sets.
\end{enumerate}
\end{theorem}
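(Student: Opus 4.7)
The plan is to follow the classical proof of the Besicovitch covering theorem in $\R^n$, combining a greedy construction of a covering family with a geometric packing estimate and a coloring argument; the constant $\gamma$ will depend only on $n$. A preliminary reduction to the case of bounded $A$ is made by splitting $\R^n$ into dyadic annuli and handling them separately, at the cost of doubling the final constant.

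First I would build greedily a sequence of balls $D_j := B(x_j, r_j)$ drawn from $\{B(x,r_x) : x\in A\}$ as follows. Set $R_0 := \sup_{x\in A} r_x < \infty$ and pick $x_1 \in A$ with $r_1 := r_{x_1} > R_0/2$. Having chosen $x_1, \ldots, x_{j-1}$, let $A_j := A \setminus \bigcup_{i<j} D_i$; if $A_j \ne \emptyset$, set $R_j := \sup_{x\in A_j} r_x$ and choose $x_j \in A_j$ with $r_j > R_j/2$. The selection rule immediately forces $|x_i - x_j| > r_i$ for $i<j$ and $r_i > r_j/2$, from which a short computation shows that the balls $B(x_j, r_j/3)$ are pairwise disjoint. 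If the process does not terminate, boundedness of $A$ confines these disjoint balls to a bounded region, whence $r_j \to 0$. Any $x \in A$ missed by all $D_j$ would then satisfy $r_x/2 < R_j/2 < r_j \to 0$, a contradiction; so $A \subseteq \bigcup_j D_j$.

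The main obstacle and heart of the proof is the geometric packing lemma: there is a constant $P(n)$ such that for every $x \in \R^n$ the number of indices $j$ with $x \in D_j$ is at most $P(n)$. Suppose $x \in D_i \cap D_j$ with $i<j$. Since $x_j \notin D_i$ one has $|x_i - x_j| > r_i$, while $|x - x_i| \le r_i$ and $|x - x_j| \le r_j < 2 r_i$. Elementary trigonometry applied to the triangle with vertices $x_i, x, x_j$ then bounds the angle $\angle(x_i - x,\, x_j - x)$ below by a fixed $\theta_0 = \theta_0(n) > 0$. Since $\R^n$ admits at most $P(n)$ unit vectors with pairwise angles $\ge \theta_0$, the claim follows.

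Finally, I would produce the $\gamma$ disjoint subfamilies by a greedy coloring. Process $D_1, D_2, \ldots$ in order and assign to $D_j$ the smallest color in $\{1, \ldots, \gamma\}$ not already used by an earlier $D_i$ that meets $D_j$. If $D_i \cap D_j \ne \emptyset$ with $i<j$, then $r_i > r_j/2$ and $|x_i - x_j| < r_i + r_j < 3 r_i$, so the centers $x_i$ cluster around $x_j$ with comparable radii; a packing estimate of the same flavor as the previous lemma bounds the number of such $i$ by a dimensional constant $P'(n)$. Taking $\gamma := P'(n) + 1$ suffices, and the color classes $B_1, \ldots, B_\gamma$ furnish the desired $\gamma$ families of pairwise disjoint balls whose union still covers $A$.
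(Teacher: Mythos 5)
The paper itself offers no proof of this theorem: it is stated in the appendix and quoted from \cite{Mat} as a classical result, so the only basis for comparison is the standard literature proof, whose skeleton (greedy selection, proof of covering, bounded overlap, greedy coloring) you reproduce correctly. The genuine gap is in what you yourself call the heart of the proof. You claim that for $i<j$ with $x\in D_i\cap D_j$, the available inequalities $|x_i-x_j|>r_i$, $|x-x_i|\le r_i$ and $|x-x_j|\le r_j<2r_i$ force the angle $\angle(x_i-x,\,x_j-x)$ to be bounded below by a positive dimensional constant. This is false: take $x_i=0$, $r_i=1$, $x_j=1.1\,e_1$, $r_j=1.2$ and $x=-0.1\,e_1$. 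All the listed inequalities hold, yet $x_i-x$ and $x_j-x$ are both positive multiples of $e_1$, so the angle is $0$. The same over-optimism reappears in the coloring step, where you assert that the earlier balls meeting $D_j$ have radii comparable to $r_j$; the greedy rule gives only the lower bound $r_i>r_j/2$, while $r_i$ may be arbitrarily large compared to $r_j$, so a single packing estimate does not bound their number either.

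The standard repair, needed in both places, is to split the relevant index set in two. For balls of comparable radius (say $r_j/2<r_i\le 3r_j$), the pairwise disjointness of the shrunken balls $B(x_i,r_i/3)$, all contained in a ball of radius $O(r_j)$ about $x_j$, gives a volume bound on their number. For balls of much larger radius ($r_i>3r_j$), the angle argument becomes valid: one shows that if two such balls both meet $B(x_j,r_j)$ and the later of the two earlier centers does not lie in the other earlier ball (which is what the selection rule actually provides), then the directions from $x_j$ to their centers are separated by a fixed angle $\theta_0(n)$, and a sphere-packing bound finishes the count. As written, your argument omits this case split, so neither the overlap constant $P(n)$ nor the coloring constant $P'(n)$ is actually established; everything else in the outline (the reduction to bounded $A$, the covering property of the greedy sequence, and the deduction of the $\gamma$ disjoint subfamilies once the intersection counts are known) is sound.
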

%\begin{lemma}[Vitali's lemma]\label{Vit_l}\cite{Mat}.\\
%Let $X$ be a boundedly compact metric space and $\mathcal{B}$ a
%family of closed balls in $X$, such that
% $$
%\sup\Big\{\dim (B),\;\; B\in \mathcal{B}\Big\}<\infty.
% $$

%Then, there exists a finite on countable sequence of disjoint balls
%$(B_i)_i\subset \mathcal{B}$, such that
% $$
%\displaystyle\bigcup_{B\in \mathcal{B}} B\subset \bigcup_i
%\big(5B_i\big).
% $$
%\end{lemma}

%\begin{theorem}[Vitali 1]\label{Vit_1}\cite{Mat}.

%Let $\mu$ be a Radon measure on $\R^n$, $A\subset\R^n$ and
%$\mathcal{B}$ a family of closed balls, such that each point of $A$
%is the center of an arbitrarily small ball of $\mathcal{B}$, i.e.
% $$
%\inf\Big\{r,\quad B_x(r)\in\mathcal{B}\Big\}=0, \qquad
%\text{for}\;\; x\in A.
% $$

%Then, there exists a family of disjoint balls $(B_i)_i\subset
%\mathcal{B}$, such that
% $$
%\mu\Big(A\setminus \bigcup_i B_i\Big)=0.
% $$
%\end{theorem}

\begin{theorem}[Vitali 2]\label{Vit_2}\cite{Edgar07}
Let $X$ be a metric space, $E$ a subset of $X$ and $\mathcal{B}$ a
family of fine cover of $E$. Then, there exists either

\begin{enumerate}
\item an infinite (centered closed ball) packing $\big\{ B_{x_i}
(r_i)\big\}_i\subset \mathcal{B}$, such that $\inf \{r_{i}\} > 0$,

or

\item a countable (possibly finite) centered closed ball packing $\big\{ B_{x_i}
(r_i)\big\}_i\subset \mathcal{B}$, such that for all $k\in\N$,
 $$
E\setminus\bigcup_{i=1}^k B_{x_i}(r_i)\subset\bigcup_{i\ge
k}B_{x_i}(5r_i).
 $$
\end{enumerate}
\end{theorem}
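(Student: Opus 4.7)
The approach is the classical greedy $5r$-algorithm adapted to fine covers. I proceed by induction, at each stage choosing a ball disjoint from those already selected whose radius is close to the supremum of admissible radii; the dichotomy in the statement records whether the chosen radii remain bounded away from $0$.

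More precisely, set $\mathcal{B}_1 := \mathcal{B}$ and $s_1 := \sup\{r : B(x,r) \in \mathcal{B}_1\}$. Having selected pairwise disjoint closed balls $B_{x_1}(r_1),\ldots,B_{x_{n-1}}(r_{n-1})$ from $\mathcal{B}$, let $\mathcal{B}_n$ be the subfamily of $\mathcal{B}$ consisting of balls disjoint from all of them, and $s_n := \sup\{r : B(x,r) \in \mathcal{B}_n\}$. If $\mathcal{B}_n = \emptyset$ the process terminates; otherwise pick $B_{x_n}(r_n) \in \mathcal{B}_n$ with $r_n \geq \min\{s_n/2,\,1\}$ (the truncation at $1$ merely handles the case $s_n = +\infty$). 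This yields a countable, possibly finite, packing $\{B_{x_i}(r_i)\}_{i\ge 1}\subset \mathcal{B}$.

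If the packing is infinite with $\inf_i r_i > 0$, alternative (1) holds. Otherwise I must verify the $5r$-covering property in alternative (2). Fix $k\in\N$ and $x \in E\setminus\bigcup_{i=1}^k B_{x_i}(r_i)$. Since the balls are closed and $x$ lies outside the first $k$, the quantity $\min_{1\le i\le k}(d(x,x_i)-r_i)$ is strictly positive, and by fineness of $\mathcal{B}$ I may select $B(x,\rho)\in\mathcal{B}$ with $\rho$ smaller than this quantity, so that $B(x,\rho)\in\mathcal{B}_{k+1}$. Were $B(x,\rho)$ disjoint from every $B_{x_j}(r_j)$ with $j>k$, it would belong to every $\mathcal{B}_n$; this is impossible in the termination case (where some $\mathcal{B}_{N+1} = \emptyset$) and in the infinite case where $r_n \to 0$ forces $s_n \le 2 r_n \to 0 < \rho$. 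Let $j$ be the smallest index $j>k$ with $B(x,\rho)\cap B_{x_j}(r_j)\neq\emptyset$; then $B(x,\rho)\in\mathcal{B}_j$, so $\rho\le s_j\le 2r_j$, and the triangle inequality gives $d(x,x_j)\le \rho+r_j\le 3r_j\le 5r_j$, i.e.\ $x\in B_{x_j}(5r_j)$ with $j\ge k$.

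The main delicate point is to organize the bookkeeping so that the three sub-cases of alternative (2) — finite termination, infinite sequence with $r_n\to 0$, and infinite sequence with radii bounded below but failing alternative (1) for some other reason — are handled uniformly, and to verify that the truncation used when $s_n=+\infty$ does not push us out of the intended branch of the dichotomy (in that case the infinite sequence of unit-radius disjoint balls is automatically in alternative (1)). Beyond that, the argument is the standard Vitali $5r$-covering proof and proceeds without further difficulty.
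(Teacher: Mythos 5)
This theorem is quoted in the paper's appendix directly from the cited reference (Edgar, \emph{Centred densities and fractal measures}); the paper itself supplies no proof, so there is nothing internal to compare against. Your argument is the standard greedy $5r$-selection proof of this Vitali-type covering lemma, which is essentially the argument in the cited source, and it is sound in structure: the induction produces a packing, the dichotomy is correctly reduced to ``finite termination or $r_n\to 0$'' versus alternative (1) (note that since $s_n$ is non-increasing and $r_n\ge\min\{s_n/2,1\}$, $\inf_i r_i=0$ already forces $s_n\to 0$ and hence $r_n\to 0$, so your ``third sub-case'' of an infinite sequence with radii bounded below but outside alternative (1) is vacuous), and the location of a first index $j>k$ meeting $B(x,\rho)$ is handled correctly in both the terminating and the $r_n\to 0$ cases.

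The one step that needs repair is the final chain $\rho\le s_j\le 2r_j$. Because of your truncation, the selection only guarantees $r_j\ge\min\{s_j/2,1\}$, so $s_j\le 2r_j$ can fail at the (finitely many) indices where $s_j>2$: there you only know $r_j\ge 1$, and $\rho\le s_j$ gives no control. The fix is immediate: fineness lets you also impose $\rho\le 1$ when choosing $B(x,\rho)$, whence $d(x,x_j)\le\rho+r_j\le 2r_j\le 5r_j$ in that case; alternatively, prune $\mathcal{B}$ at the outset to balls of radius at most $1$ (this preserves fineness and makes $s_n\le 1$, eliminating the truncation entirely). With that one-line adjustment the proof is complete.
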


\newpage

\end{document}